\newcommand{\excise}[1]{}
\newtheorem{thm}{Theorem}[section]
\newtheorem{lemma}[thm]{Lemma}
\newtheorem{cor}[thm]{Corollary}
\newtheorem{prop}[thm]{Proposition}
\newtheorem{prob}[thm]{Open Problem}
\newtheorem{mainthm}[thm]{Main Theorem}
\newtheorem*{mainquestion}{Main Question}
\theoremstyle{definition}
\newtheorem{example}[thm]{Example}
\newtheorem{remark}[thm]{Remark}
\newtheorem{defn}[thm]{Definition}
\numberwithin{equation}{section}
\renewcommand\>{\rangle}
\newcommand\CC{\mathbb{C}}
\newcommand\RR{\mathbb{R}}
\newcommand\ZZ{\mathbb{Z}}
\newcommand\kk{\Bbbk}
\DeclareMathOperator\convhull{conv} 
\DeclareMathOperator\lcm{lcm} 
\DeclareMathOperator\supp{supp} 
\DeclareMathOperator\sg{Sg}
\DeclareMathOperator\cone{Cone}
\DeclareMathOperator\spann{span}
\begin{document}

\mbox{}
\title[Convexity in (colored) affine semigroups]{Convexity in (colored) affine semigroups}

\author[De Loera]{Jes\'us A. De Loera}
\address{Mathematics Department\\University of California Davis\\Davis, CA 95616}
\email{deloera@math.ucdavis.edu}

\author[O'Neill]{Christopher O'Neill}
\address{Mathematics Department\\San Diego State University\\San Diego, CA 92182}
\email{cdoneill@sdsu.edu}

\author[Wang]{Chengyang Wang}
\address{Mathematics Department\\University of California Davis\\Davis, CA 95616}
\email{cyywang@ucdavis.edu}

\makeatletter
  \@namedef{subjclassname@2020}{\textup{2020} Mathematics Subject Classification}
\makeatother
\subjclass[2020]{20M14,52A01,52A37}

\keywords{semigroups, Carath\'eodory's theorem, Helly's theorem, Tverberg's theorem, colorful theorems}

\date{\today}

\dedicatory{Dedicated to Bernd Sturmfels on the occasion of his 60th birthday.}

\begin{abstract}
In this paper, we explore affine semigroup versions of the convex geometry theorems of Helly, Tverberg, and Carath\'eodory.  
Additionally, we develop a new theory of \emph{colored affine semigroups}, where the semigroup generators each receive a color and the elements of the semigroup take into account the colors used (the classical theory of affine semigroups coincides with the case in which all generators have the same color).  We prove an analog of Tverberg's theorem and colorful Helly's theorem for semigroups, as well as a version of colorful Carath\'eodory's theorem for cones.  We also demonstrate that colored numerical semigroups are particularly rich by introducing a colored version of the Frobenius number. 
\end{abstract}

\maketitle


\section{Introduction}
\label{sec:intro}

A \emph{semigroup} is a set with an associative binary operation. For an integer matrix $A \in \ZZ^{d\times n}$, we consider the additive semigroup
$$\sg(A) = \{b \in \RR^d : Ax=b, \text{for some} \, \, x \in \ZZ^n, x \geq 0 \}$$
of all non-negative integer combinations of the column vectors of $A$, known as the \emph{affine semigroup} generated by $A$ (see \cite{BGT} for an introduction).  
We also write $S = \sg(V)$ for a set of vectors $V = \{v_1, \ldots, v_n\}$ to denote the semigroup generated by the matrix with column vectors $v_1, \ldots, v_n$.  
Affine semigroups lie in the intersection of algebraic geometry, combinatorics, commutative algebra, convex discrete geometry and number theory.  They are the combinatorial building blocks of toric varieties~\cite{coxlittlehenk}, and they find countless applications in optimization and number theory~\cite{barvinokconvexity,Barvinokbookzurich, BeckRobins:extension, bertsimas-weismantel:ip-book, BGT, cassels1997,eisenbrand:50-years-ip-geom-num-chapter, stanley0, sturmfels}.  At its core, the affine semigroup $\sg(A)$ is the algebraic-combinatorial analogue of its \emph{associated (convex polyhedral) cone}
$$\cone(A) = \{b: Ax = b \text{ for some } x \in \RR^n, x \geq 0 \},$$
consisting of all non-negative real combinations of the columns of $A$.  
In particular, the classical linear Diophantine problem 
\begin{equation}\label{eq:feas1}
A x = b, \, x \geq 0, \, x \in \ZZ^n
\end{equation}
has a solution if and only if $b \in \sg(A)$.  

This paper explores the following question (the theorems mentioned therein are discussed in detail in the paragraphs that follow).  

\begin{mainquestion}
How far can one generalize the convex geometry theorems of Helly, Tverberg, and Carath\'eodory to affine semigroups?
\end{mainquestion}


Helly's theorem, a basic result in convex geometry, states that given a finite family~$\mathcal F$ of convex sets in $\mathbb{R}^{d}$, if every collection of $d+1$ sets in $\mathcal F$ intersect, then the whole family intersects~\cite{DimHelly}. Helly-type theorems appear in many variations~\cite{barany2022helly,DimHelly,JesusSurvey}; for example, Doignon's theorem, an integer version of Helly's theorem, states that if every collection of $2^{d}$ sets in $\mathcal F$ intersect at an integer point, then the whole family intersects at an integer point~\cite{Doignon}.  Our first result is another such variation, one for affine semigroups.  

Recall that a subsemigroup $S = \sg(A) \subseteq \ZZ^m$ is \emph{pointed} if it has no nontrivial subgroups.  This is equivalent to require that $\cone(A)$ contains no positive dimensional linear subspace of $\RR^d$.  We say $S$ is \emph{trivial} if $S = \{0\}$.  

\begin{mainthm}[A Helly theorem for affine semigroups]\label{mt:helly}
For each $m \in \ZZ_{\ge 0}$, there exists a constant $N(m) \in \ZZ_{\ge 1}$ such that the following holds:\ given any finite family $\mathcal{F} = \{ S_{1} ,\ldots,S_{n}\}$ of affine semigroups in $
\ZZ^{m}$, and letting $C_i = \cone(S_i)$ for each $i$, if the intersection of any $N(m)$ affine semigroups in $\mathcal{F}$ is nontrivial, then $S_1 \cap \cdots \cap S_n$ is nontrivial.  More specifically,

\begin{enumerate}[(a)]
\item if each $S_i$ is pointed and $C_1, \ldots, C_n$ do not cover $\RR^{m}$, then $N(m) = m$;
\item if each $S_i$ is pointed and $C_1, \ldots, C_n$ cover $\RR^{m}$, then $N(m) = m + 1$; and
\item if some $S_i$ is not pointed, then $N(m) = 2m$.
\end{enumerate}
\end{mainthm}

Carath\'eodory's theorem says that given a pointed cone $C \subseteq \mathbb{R}^d$, every element $x \in C$ is generated by at most $d$ extreme rays of $C$. (see~\cite{DimHelly} for various variations)  In wide contrast to Main Theorem~\ref{mt:helly}, obtaining a variant of Carath\'eodory's theorem for affine semigroups (i.e., a bound on the number of generators needed to generate any given element) is much more complicated.  In particular, for general affine semigroups, it is impossible to obtain such a bound in terms of ambient dimension $d$ alone; one must also take into account, for instance, the coordinates of the semigroup generators (see \cite{sparse} and all the references there).  A special case of particular interest is when $S = \sg(A)$ is {\em normal}, i.e., if $\sg(A) = \cone(A) \cap \Lambda$ for some lattice $\Lambda$; in this case, every element of $S$ can be generated by at most $2d-2$ generators, though this bound is not tight~\cite{Sebo90}.  


Colorful variations of Helly's, Carath\'eodory's, and Tverberg's theorems have been a key topic in combinatorial convexity~\cite{BaranyConvexity,JesusSurvey}.  
In this vein, we introduce \emph{colored affine semigroups}, wherein each column of the generating matrix $A$ receives one of $\ell$ different colors.  Such a coloring naturally gives a partition $\{\mathcal{I}_{i}\}_{i=1}^{\ell}$ of the $n$ columns of $A$, and we denote the colored affine semigroup as $\sg(A_{1},A_{2},\ldots,A_{\ell})$, where $A_i$ denotes the submatrix of $A$ with color $i$.  In~\cite{ColorfulCaratheodory,BaranyOnn97,Sarrabezolles}, the same type of ideas are studied for \emph{real solutions}; here, we require \emph{integer solutions}, and the theory becomes more subtle.  

Parts~(a) and~(c) of the following definition also appear in~\cite{BaranyOnn97,Rado}.  

\begin{defn}\label{d:colorfulelements}
Fix a solution vector $x$ of $Ax = b$.  The \emph{support} of $x$, denoted $\supp(x)$, is the set of indices $i$ such that $x_{i} \neq 0$, and we say $x$ \emph{uses} a color $c$ if $i \in \supp(x)$ for some $i \in \mathcal{I}_c$.  We say $x$ is:

\begin{enumerate}[(a)]
\item $k$-\emph{chromatic} if $x$ uses at least $k$ different colors;
\item \emph{monochromatic} if $x$ is not $2$-chromatic (i.e., $\supp(x) \subseteq \mathcal{I}_{i}$ for some $i$); 
\item \emph{chromatic} if $x$ uses all available colors (i.e., $|\supp(x) \bigcap \mathcal{I}_{i}| \geq 1$ for all $i$); and
\item \emph{colorful} if no 2 columns of identical color are used (i.e., $|\supp(x) \bigcap \mathcal{I}_{i}| \leq 1$ for all~$i$).
\end{enumerate}
\end{defn}

Some of the terms in Definition~\ref{d:colorfulelements} have subtle distinctions. Example~\ref{e:colorfulelements} shows some differences, and in particular that neither chromatic nor colorful implies the other. 

\begin{example}\label{e:colorfulelements}
Let $A = [9 \,\, 16 \,\, 11 \,\, 14 \,\, 12 \,\, 13]$ and $b = 70$.  Let $\mathcal{I}_{1} = \{1,2\}$, $\mathcal{I}_{2} = \{3,4\}$, $\mathcal{I}_{3} = \{5,6\}$ be a 3-coloring of $A$. Consider
$$ Ax = b, \quad x \geq 0, \quad x \in \mathbb{Z}^{6}.$$
The solution $x = (6,1,0,0,0,0)$ is monochromatic.  
The solution $x = (3,1,0,1,0,1)$ is chromatic since each color is used, but not colorful since two distinct columns from $\mathcal I_1$ are used.  
The solution $x = (0,1,0,2,0,2)$ is both chromatic and colorful, since exactly one column is used from each color.  
Lastly, the solution $x = (0,0,2,0,4,0)$ is colorful, 2-chromatic, but not chromatic.
\end{example}

We briefly argue that the above notions arise naturally when modeling manufacturing diversity requirements.  
The notion of colorful has already been connected to linear programming and game theory in \cite{JesusSurvey,Sarrabezolles}.  When dealing with indivisible goods, this kind of integer programming requires affine semigroups.  
Imagine your company produces batteries with three ingredient providers (call them red, green, and blue).  They each sell exactly the same resources or ingredients to you, which are represented by vectors (say different types of metals or chemicals). But due to trade agreements, one cannot produce a battery with parts coming from one provider alone (no monochromatic solutions are allowed).  Since batteries must be built with parts from at least two providers, solutions then have to be 2-chromatic.  Or regulations can be even more strict, requiring batteries to be built with ingredients from all three providers (chromatic solutions).  Another possible type of restriction is that a company may only contribute at most one ingredient to the creation of your product (colorful solutions).  In some scenarios, it should be possible to purchase the same ingredient from different providers to cover demand.  As such, we allow the same column to appear more than once, but with a different color.

Colorful versions of Helly's and Tverberg's theorems for affine semigroups follow from Main 
Theorem~\ref{mt:helly} (Corollaries~\ref{c:colorhelly} and~\ref{c:tverberg}),
but obtaining a colorful version of Carath\'eodory's theorem for affine semigroups turns 
out to be a bit more subtle.  

With the above definitions in hand, we recall a colorful variation of Carath\'eodory's theorem due to B\'ar\'any.  
Given $d+1$ nonempty subsets $I_1, \ldots, I_{d+1} \subseteq \RR^d$, B\'ar\'any's theorem states that any point $x \in \convhull(I_1) \cap \cdots \cap \convhull(I_{d+1})$ can be expressed as the convex combination of $d+1$ points, with one point from each $I_j$~\cite{ColorfulCaratheodory}.
Considering each set $I_j$ as a color class, B\'ar\'any's theorem has the following interpretation:\ given a colored generating matrix $A$ and an element $b \in \cone(A)$, if a monochromatic solution exists for each color, then a colorful solution exists.  In discrete convexity, the colorful Carath\'eodory theorem has been intensely studied~\cite{DMS,DSX,MMSS}.  

Returning once again to affine semigroups, suppose an element $b$ of a colored affine semigroup has a monochromatic solution for each color.  Can one guarantee $b$ also has a colorful solution?  What about a chromatic solution?  Note, an answer of ``yes'' to either question would constitute a variant of B\'ar\'any's theorem for affine semigroups.  
It~turns out, the answer to the latter question is indeed ``yes'' for all but finitely many~$b$ (Main Theorem~\ref{mt:colorcaratheodory}), but the former question has a overwhelmingly negative answer, as the following result demonstrates in two different ways.  Note that the families described therein can be easily lifted to higher dimensions.  


\begin{mainthm}\label{mt:failureofcaratheodory}
B\'ar\'any's \textbf{colorful} Carath\'eodory theorem fails to extend to affine semigroups.  
\begin{enumerate}[(a)]
\item 
There exist colored affine semigroups with arbitrarily many colors in $\RR^3$, formed by a family $\mathcal{F}$ of normal affine semigroups and an element $b$ such that $b$ has a monochromatic solution for every color but yet has no colorful solutions and no chromatic solutions (in fact, every solution for $b$ is monochromatic).

\item
There exist colored affine semigroups with arbitrarily many colors in $\RR^4$, formed by a family $\mathcal{F}$ of normal affine semigroups and infinitely many elements $b$ such that $b$ has a monochromatic solution for every color and has no colorful solution.
\end{enumerate}
\end{mainthm}

We now turn our attention to chromatic solutions.  
Main Theorem~\ref{mt:failureofcaratheodory}(a) demonstrates the ``all but finitely many'' hypothesis in Main Theorem~\ref{mt:colorcaratheodory} cannot be dropped.  We note that this hypthesis may seem unnatural to those in convexity theory, but such theorems arise frequently in semigroup theory, where the finitely many exceptions can be attributed to the important notion of \emph{gaps} or \emph{holes} describing exceptions in semigroup membership~\cite{gapsref}.  

\begin{mainthm}[A \textbf{chromatic} Carath\'eodory theorem for affine semigroups]\label{mt:colorcaratheodory}
In~any colored affine semigroup $S$, all but finitely many elements $b \in S$ with a monochromatic solution for each color also have a chromatic solution.  
\end{mainthm}


In Section~\ref{sec:numericalsemigroups}, we consider the special case of \emph{numerical semigroups}~\cite{Rosales}, namely, when the matrix $A=(a_1,\ldots,a_n)\in\ZZ_{>0}^n$ is a positive integral $n$-dimensional vector that is \emph{primitive} (i.e., $\gcd(A) = 1$).  Often referred as a \emph{knapsack problem}~\cite{kellereretal}, numerical semigroups are fundamental and look simple, but are often a source of very challenging problems \cite{aardaletal3}.  
One old and classical problem
is the \emph{Frobenius coin-exchange problem}, which asks for the largest integer $F(A) = b$ that cannot be expressed as a non-negative integral combination of the $a_i$'s \cite{frob} (here, the $a_i$'s' are interpreted as coin values, and we are looking for the largest value $b$ for which one cannot make even change). 

Here, we consider the \emph{\textbf{chromatic} Frobenius problem}:\ 
if we assign one of $\ell$ different colors to each $a_i$, then the challenge is to find the largest $b$ such that $b \in \sg(A_{1},\ldots,A_{\ell})$ but no solution $b=Ax$ uses all distinct colors.  
More specifically, we define the \emph{$k$-chromatic Frobenius number} of $S = \sg(A_{1},\ldots,A_{\ell})$ as the largest integer $b = \mathsf{CF}_{k}(S)$ with no $k$-chromatic solution. 
Our main results in this direction are as follows. 

\begin{mainthm}\label{mt:numerical}
Fix $k \ge 1$ and a colored numerical semigroup $S = \sg(A_{1},\ldots,A_{\ell})$.  
\begin{enumerate}[(a)]
\item 
\label{colorFrobeniusFT}
There are only finitely many elements of $S$ that are not $k$-chromatic, and as such, the $k$-chromatic Frobenius number $\mathsf{CF}_{k}(S)$ is well defined. 

\item 
\label{colorFrobeniusNP}
Computing the colored Frobenius number $\mathsf{CF}_{k}(S)$ is NP-hard. 

\item 
\label{cardcolorsolutions}
The number of distinct $k$-chromatic solutions of a positive integer $b$ coincides with a quasipolynomial function in $b$ for sufficiently large $b$.  

\end{enumerate}
\end{mainthm}

\section{Helly and Tverberg theorems for semigroups}
\label{sec:withcolors}

In order to prove Main Theorem~\ref{mt:helly}, we recall three fundamental results about affine semigroups that, together, ensure a nontrivial intersection of affine semigroups occurs precisely when their associated cones intersect nontrivially (Proposition~\ref{p:capequivalent}).  

\begin{lemma}[{\cite[Corollary~2.11(a)]{BrunsPolytopes}}]\label{l:fgcapfg}
The intersection of two affine semigroups is again an affine semigroup.
\end{lemma}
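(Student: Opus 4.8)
The plan is to prove this directly via Gordan's lemma / the fundamental finiteness theorem for lattice points in rational cones. Let $S_1 = \sg(A)$ and $S_2 = \sg(B)$ be affine semigroups in $\ZZ^m$, i.e., $S_1 = \cone(A) \cap L_1$ and $S_2 = \cone(B) \cap L_2$ where $L_i$ is the subgroup of $\ZZ^m$ generated by the columns of the respective matrix. First I would observe that $S_1 \cap S_2 = (\cone(A) \cap \cone(B)) \cap (L_1 \cap L_2)$: the containment $\subseteq$ is immediate, and for $\supseteq$ one uses that membership of a lattice point $v$ in $\cone(A)$ with $v \in L_1$ forces $v \in S_1$ by a standard argument (clear denominators in a rational conic combination, then use that the fundamental parallelepiped contributes only finitely many lattice points which can be absorbed — this is exactly the Gordan-type statement). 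Then $\cone(A) \cap \cone(B)$ is again a finitely generated rational cone (intersection of finitely many rational half-spaces), and $L_1 \cap L_2$ is a subgroup of $\ZZ^m$, hence finitely generated and equal to $\ZZ^m \cap V$ for the rational subspace $V = (L_1 \cap L_2) \otimes \QQ$.

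Next I would reduce to showing that $C \cap \Lambda$ is a finitely generated semigroup whenever $C$ is a finitely generated rational cone and $\Lambda$ is a finitely generated subgroup of $\ZZ^m$. Writing $\Lambda = \ZZ^m \cap V$, I can work inside $V$, choose a $\ZZ$-basis of $\Lambda$ to identify $(V, \Lambda)$ with $(\RR^k, \ZZ^k)$, under which $C \cap V$ becomes a rational cone $C'$ in $\RR^k$; then $C' \cap \ZZ^k$ is finitely generated by Gordan's lemma. Pulling back the generators gives a finite generating set for $S_1 \cap S_2$, and this set, together with the fact that the result is a subset of $\ZZ^m$ closed under addition, shows it is an affine semigroup.

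The main obstacle is really just assembling the standard facts cleanly: the only genuinely substantive input is Gordan's lemma (finite generation of $C' \cap \ZZ^k$), and the rest is bookkeeping — checking the set-theoretic identity for $S_1 \cap S_2$, verifying that the intersection of two rational cones is a rational cone, and confirming that $L_1 \cap L_2$ is a finitely generated group of the form $\ZZ^m \cap V$. Since the excerpt cites this as \cite[Corollary~2.11(a)]{BrunsPolytopes}, I would in practice simply invoke that reference; the sketch above is the argument it encapsulates. One subtlety to be careful about: the lattice one intersects with must be $L_1 \cap L_2$ and not all of $\ZZ^m$, since affine semigroups need not be normal, so the identity $S_1 \cap S_2 = (\cone(A)\cap\cone(B)) \cap (L_1 \cap L_2)$ must be checked with the $S_i$'s own lattices, exactly as done above.
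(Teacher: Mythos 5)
Your reduction fails at its very first step. You assert that $S_1 = \cone(A) \cap L_1$, where $L_1$ is the subgroup generated by the columns of $A$, and you justify the inclusion $\cone(A) \cap L_1 \subseteq S_1$ by a clear-denominators/fundamental-parallelepiped argument. That argument proves finite generation of the \emph{normal} monoid $\cone(A) \cap L_1$ (Gordan's lemma), but it does not place those lattice points inside $\sg(A)$: an affine semigroup is in general not normal even relative to its own group. Concretely, take $S_1 = S_2 = \sg(2,3) \subset \ZZ$. Then $L_1 \cap L_2 = \ZZ$ and $\bigl(\cone(A) \cap \cone(B)\bigr) \cap \ZZ = \ZZ_{\ge 0}$, whereas $S_1 \cap S_2 = \sg(2,3)$ does not contain $1$; so the identity $S_1 \cap S_2 = \bigl(\cone(A)\cap\cone(B)\bigr) \cap (L_1 \cap L_2)$ on which everything else rests is false. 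Indeed, were it true, every intersection of affine semigroups would be normal, while the paper explicitly treats normality ($\sg(A) = \cone(A)\cap\Lambda$ for some lattice $\Lambda$) as a special hypothesis. The ``subtlety'' you flag at the end (using $L_1\cap L_2$ rather than $\ZZ^m$) does not address this: the failure of normality is relative to the semigroup's own group, not to the ambient lattice.

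The lemma is true, and your instinct to invoke Gordan's lemma can be salvaged by applying it \emph{upstairs} in exponent space rather than downstairs in $\RR^m$. Let $A$ and $B$ have $n_1$ and $n_2$ columns and consider $P = \{(x,y) \in \ZZ_{\ge 0}^{n_1} \times \ZZ_{\ge 0}^{n_2} : Ax = By\}$, the set of lattice points of a rational polyhedral cone in $\RR^{n_1+n_2}$; by Gordan's lemma $P$ is a finitely generated monoid. The monoid homomorphism $(x,y) \mapsto Ax$ maps $P$ onto exactly $S_1 \cap S_2$, since an element of $\ZZ^m$ lies in the intersection precisely when it equals $Ax = By$ for nonnegative integer vectors $x,y$; a homomorphic image of a finitely generated monoid is finitely generated, so $S_1 \cap S_2$ is an affine semigroup. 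This is precisely the construction the paper records in Remark~\ref{r:computeintersection}. Note also that the paper offers no proof of the lemma itself, citing Bruns--Gubeladze instead, so simply invoking that reference is acceptable --- but your sketch is not the argument that citation encapsulates, and as written it proves a false statement.
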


\begin{remark}\label{r:computeintersection}
If a semigroup $S = \cone(A) \cap \Lambda$ for some lattice $\Lambda$, then the minimal generating set of $S$, called the \emph{Hilbert basis} of $A$, can be computed~\cite{brunskoch}.  
If~the generators of two affine semigroups $S_1$ and $S_2$ are given via matrices $A$ and $B$, then one can compute the generators of $S_1 \cap S_2$ by constructing a rational cone 
$$C = \{(x,y) \ge 0 : Ax - By = 0\},$$
finding its Hilbert basis, and then mapping each Hilbert basis element $(x,y) \mapsto Ax$. 
\end{remark}

\begin{lemma}\label{l:ktimesp}
For any affine semigroup $S \subset \ZZ^d$, if $\cone(S)$ contains an integral point~$p$, then $k p \in S$ for some positive integer $k$.
\end{lemma}

\begin{lemma}\label{l:cpcommute}
For affine semigroups, taking finite intersections commutes with taking the associated cone:\ if $S_1, \ldots, S_n \subset \ZZ^d$ are affine, then $\cone(\bigcap_{i=1}^{n} S_{i}) = \bigcap_{i=1}^{n} \cone(S_{i})$.
\end{lemma}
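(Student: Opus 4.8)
The plan is to establish the two inclusions separately. The inclusion $\cone(\bigcap_{i=1}^n S_i) \subseteq \bigcap_{i=1}^n \cone(S_i)$ is immediate: from $\bigcap_{i=1}^n S_i \subseteq S_j$ and the obvious monotonicity of $\cone(\cdot)$ under inclusion we get $\cone(\bigcap_{i=1}^n S_i) \subseteq \cone(S_j)$ for every $j$. For the reverse inclusion I would first reduce to the case $n=2$ by induction on $n$: writing $T = S_1 \cap \cdots \cap S_{n-1}$, which is again an affine semigroup by Lemma~\ref{l:fgcapfg}, the two-fold case gives $\cone(\bigcap_{i=1}^n S_i) = \cone(T \cap S_n) = \cone(T) \cap \cone(S_n)$, and the inductive hypothesis gives $\cone(T) = \bigcap_{i=1}^{n-1}\cone(S_i)$.

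So it remains to prove $\cone(S_1) \cap \cone(S_2) \subseteq \cone(S_1 \cap S_2)$. Since $S_1$ and $S_2$ are finitely generated by integer vectors, $\cone(S_1)$ and $\cone(S_2)$ are rational polyhedral cones, hence so is $C := \cone(S_1) \cap \cone(S_2)$; in particular $C$ is generated as a cone by finitely many integer vectors (for instance, integer points on its extreme rays), so every point of $C$ is a non-negative real combination of points of $C \cap \ZZ^d$. As $\cone(S_1 \cap S_2)$ is itself closed under non-negative combinations, it therefore suffices to show $C \cap \ZZ^d \subseteq \cone(S_1 \cap S_2)$.

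Now I would fix $q \in C \cap \ZZ^d$ and apply Lemma~\ref{l:ktimesp} twice: since $q$ is an integral point of $\cone(S_1)$ there is $k_1 \in \ZZ_{>0}$ with $k_1 q \in S_1$, and similarly $k_2 q \in S_2$ for some $k_2 \in \ZZ_{>0}$. Because affine semigroups are closed under addition, $k_1 k_2 q = k_2(k_1 q) \in S_1$ and $k_1 k_2 q = k_1(k_2 q) \in S_2$, so $k_1 k_2 q \in S_1 \cap S_2$, and hence $q = \tfrac{1}{k_1 k_2}(k_1 k_2 q) \in \cone(S_1 \cap S_2)$, as desired. I expect the only real subtlety to be this last step — namely, recognizing that one should not try to manipulate an arbitrary real point directly inside the semigroups, but instead pass to the integer points of the intersected cones, where Lemma~\ref{l:ktimesp} precisely converts cone-membership into (a multiple of) semigroup-membership; the remaining ingredients (monotonicity, the induction via Lemma~\ref{l:fgcapfg}, and rationality of the cones) are routine.
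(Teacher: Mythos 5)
Your proof is correct, and since the paper states this lemma without proof, your argument---reducing to two semigroups via Lemma~\ref{l:fgcapfg}, passing to integer points of the rational polyhedral cone $\cone(S_1)\cap\cone(S_2)$, and converting cone-membership into semigroup-membership via Lemma~\ref{l:ktimesp}---is essentially the intended one, matching exactly how these lemmas are combined in Proposition~\ref{p:capequivalent}. One cosmetic point: if the intersected cone is not pointed it has no extreme rays, but it is still a rational polyhedral cone and hence generated by finitely many integer vectors, which is all your argument actually uses.
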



\begin{prop}\label{p:capequivalent}
The intersection $\bigcap_i S_i$ of affine semigroups $S_1, \ldots, S_n \subset \ZZ^d$ contains a non-zero element if and only if $\bigcap_i \cone(S_i)$ contains a non-zero element.  
\end{prop}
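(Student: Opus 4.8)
The plan is to derive the statement directly from the three preceding lemmas: the forward implication is immediate, and the reverse implication needs only a short argument to pass from a real point of the cone to an integral point of the semigroup.

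For the forward direction, I would simply note that $S_i \subseteq \cone(S_i)$ for each $i$, so any nonzero $v \in \bigcap_i S_i$ is automatically a nonzero element of $\bigcap_i \cone(S_i)$. Nothing else is required here.

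For the reverse direction, first I would apply Lemma~\ref{l:fgcapfg} together with an induction on $n$ to conclude that $S := \bigcap_{i=1}^{n} S_i$ is again an affine semigroup, and then invoke Lemma~\ref{l:cpcommute} to identify $\bigcap_i \cone(S_i)$ with $\cone(S)$. Thus the hypothesis provides a nonzero $w \in \cone(S)$. Since $S$ is generated by finitely many integer vectors, $\cone(S)$ is a rational polyhedral cone, so the ray $\RR_{\ge 0}\, w$ meets the lattice in a nonzero point: writing $w$ as a nonnegative rational combination of the generators of $S$ and clearing denominators produces $p \in \cone(S) \cap \ZZ^d$ with $p \neq 0$. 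Finally, Lemma~\ref{l:ktimesp} yields a positive integer $k$ with $kp \in S$, and $kp \neq 0$ because $p \neq 0$; hence $\bigcap_i S_i = S$ contains a nonzero element, as desired.

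The only step requiring any care — and it is a minor one — is the passage from the real point $w$ to the integral point $p$: one cannot take $p = w$, since $w$ need not be integral, and one must use the rationality of the generating set of $S$ to return to $\ZZ^d$ before applying Lemma~\ref{l:ktimesp}. Beyond this bookkeeping I do not anticipate any genuine obstacle.
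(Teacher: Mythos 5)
Your overall route is exactly the paper's: the proof in the paper consists of nothing more than invoking Lemmas~\ref{l:fgcapfg},~\ref{l:ktimesp} and~\ref{l:cpcommute}, and your write-up correctly identifies how they fit together (intersection is again affine, cones commute with intersection, then pass from the cone back to the semigroup). However, the one step you yourself flag as needing care is the one that, as written, is wrong. A nonzero $w \in \cone(S)$ need \emph{not} be a nonnegative \emph{rational} combination of the generators, and the ray $\RR_{\ge 0}\, w$ need not meet $\ZZ^d$ in a nonzero point: for $S$ generated by $e_1, e_2$ in $\ZZ^2$, the point $w = (\sqrt{2},1)$ lies in $\cone(S)$ but its ray contains no nonzero lattice point, and any expression of $w$ in terms of $e_1, e_2$ has an irrational coefficient. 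So ``clearing denominators'' is not available, and the integral point $p$ you feed into Lemma~\ref{l:ktimesp} is not produced by your argument.

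The gap is easily closed, because you do not need an integral point on the ray through $w$ --- you only need \emph{some} nonzero integral point of $\cone(S)$. Since $S = \bigcap_i S_i$ is an affine semigroup (Lemma~\ref{l:fgcapfg}), $\cone(S)$ is generated by the finitely many integer vectors generating $S$; if $\cone(S) \neq \{0\}$, then one of these generators $g$ is nonzero, and $g$ is a nonzero integral point of $\cone(S)$ (indeed $g \in S$ already, so at this point Lemma~\ref{l:ktimesp} is not even strictly necessary --- equivalently, $\cone(S) \neq \{0\}$ directly forces $S \neq \{0\}$). With that one-line replacement of the ``rational combination'' step, your argument is complete and agrees with the paper's.
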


\begin{proof}
Apply all parts of Lemma Lemmas~\ref{l:fgcapfg},~\ref{l:ktimesp} and~\ref{l:cpcommute}.
%
%
\end{proof}

\begin{proof}[Proof of Main Theorem \ref{mt:helly}]

Let $C_i = \cone(S_i)$ for each $i$, and let $\mathcal G = \{C_1, \ldots, C_n\}$. 
By Proposition~\ref{p:capequivalent}, it suffices to show in each case that $C_1 \cap \cdots \cap C_n$ is nontrivial. 

\begin{enumerate}[(a)]
\item 
Consider the unit sphere $\mathbb S^{d-1} \subset \RR^d$.  If a convex cone $C$ contains a non-zero element, then $C$ will intersect the unit sphere.  Hence, instead of proving the family $\mathcal{G}$ intersects at a non-zero element, it suffices to prove $\mathcal G' =\{C_{1} \cap\mathbb{S}^{d-1} ,\ldots,C_{n}\cap \mathbb{S}^{d-1}\}$ has nonempty intersection.  Since $\mathcal G$ does not cover $\RR^d$, $\mathcal G'$ does not cover $\mathbb{S}^{d-1}$.  Therefore, for any point $q \in \mathbb{S}^{d-1}$ not covered by $\mathcal G'$, there exists a homeomorphism $f :\mathbb{S}^{d-1} \setminus \{q\} \to \mathbb{R}^{d-1}$. under which it suffices to prove the family 
$$\mathcal{G}'' = \{f(C_{1} \cap\mathbb{S}^{d-1}) ,\ldots,f(C_{n}\cap \mathbb{S}^{d-1})\}$$ has nonempty intersection.  To this end, we employ a topological variant of Helly's theorem~\cite{TopHelly}, which states that for a finite family of closed sets in $\RR^d$, if the intersection of every $d+1$ members is contractible, then the intersection of the family is contractible.  

Now, each $C_i$ is a rational polyhedral cone and therefore topologically closed, and if $S_i$ has only the trivial subgroup, then $C_i$ is pointed.  Since each $C_i$ is closed and pointed, so is any intersection of the $C_i$'s.  We can conclude that each set $f(C_{i} \cap \mathbb{S}^{d-1})$ is closed in $\mathbb R^{d-1}$, and in particular that the intersection of any $N = d$ of the sets in $\mathcal G''$ is nonempty and contractible.  As such, applying the aforementioned topological Helly's theorem to $\mathcal G''$ completes the proof. 

\item 
If each $S_i$ has only the trivial subgroup, then each $C_i$ is pointed, and thus $C_i \setminus \{0\}$ is convex for each $i$.  As such, if every $N = d+1$ of the $C_i$'s intersects nontrivially, then the claim in this case follows from Helly's theorem for convex sets in $\RR^d$.

\item 
In this case, we employ a $j$-dimensional variant of Helly's theorem~\cite{DimHelly}, a special case of which states that for a family of finite convex sets in $\RR^{d}$, if the intersection of every $2d$ members is at least 1-dimensional, then the intersection of the family is at least 1-dimensional.  This can be applied directly, as any intersection of rational cones that contains a nonzero point must be at least 1-dimensional.  
\end{enumerate}

In each of the above cases, $C_1 \cap \dots \cap C_n$ contains a non-zero element.
%
\end{proof}

We now illustrate that each choice of $N$ in Main Theorem~\ref{mt:helly} is best possible.

\begin{example}\label{e:mthelly}
Let $e_{i}$ be $i$-th standard basis in $\mathbb{R}^{d}$. 
\begin{enumerate}[(a)]
\item 
Let $E = \{e_1, \ldots, e_d\}$, and consider the affine semigroups $S_i = \sg(E \setminus \{e_i\})$.  
The intersection of any $d-1$ contains a non-zero element, as 
$e_{i} \in \bigcap_{j\neq i} S_j$
for each $i$, but the intersection of all $d$ affine semigroups is trivial.

\item 
Let $P$ be any $d$-simplex with the origin in its interior and vertices set denoted $V = \{v_1, \ldots, v_{d+1}\}$, and consider the affine semigroups $S_i = \sg(V\setminus \{v_i\})$.  
We can verify that
$v_i \in \bigcap_{j\neq i} S_j$
for each $i$, but $\bigcap_j S_j$ is trivial.  

\item 
Let $E = \{e_1, -e_1, \ldots, e_d, -e_d\}$.  Consider the affine semigroups
$$
S_{i,+} = \sg(E \setminus \{-e_{i}\})
\qquad \text{and} \qquad
S_{i,-} = \sg(E \setminus \{e_{i}\})
$$
for each $i$.  Any $2d - 1$ of the above affine semigroups share a non-zero element, as 
$$\pm e_i \in S_{1,+} \cap S_{1,-} \cap \dots \cap S_{i,\pm} \cap \dots \cap S_{d,+} \cap S_{d,-}$$
for each $i$, but the only point common to all $2d$ is the origin.
\end{enumerate}
\end{example}

\begin{figure}[t]
\centering
\includegraphics[width=0.4\textwidth]{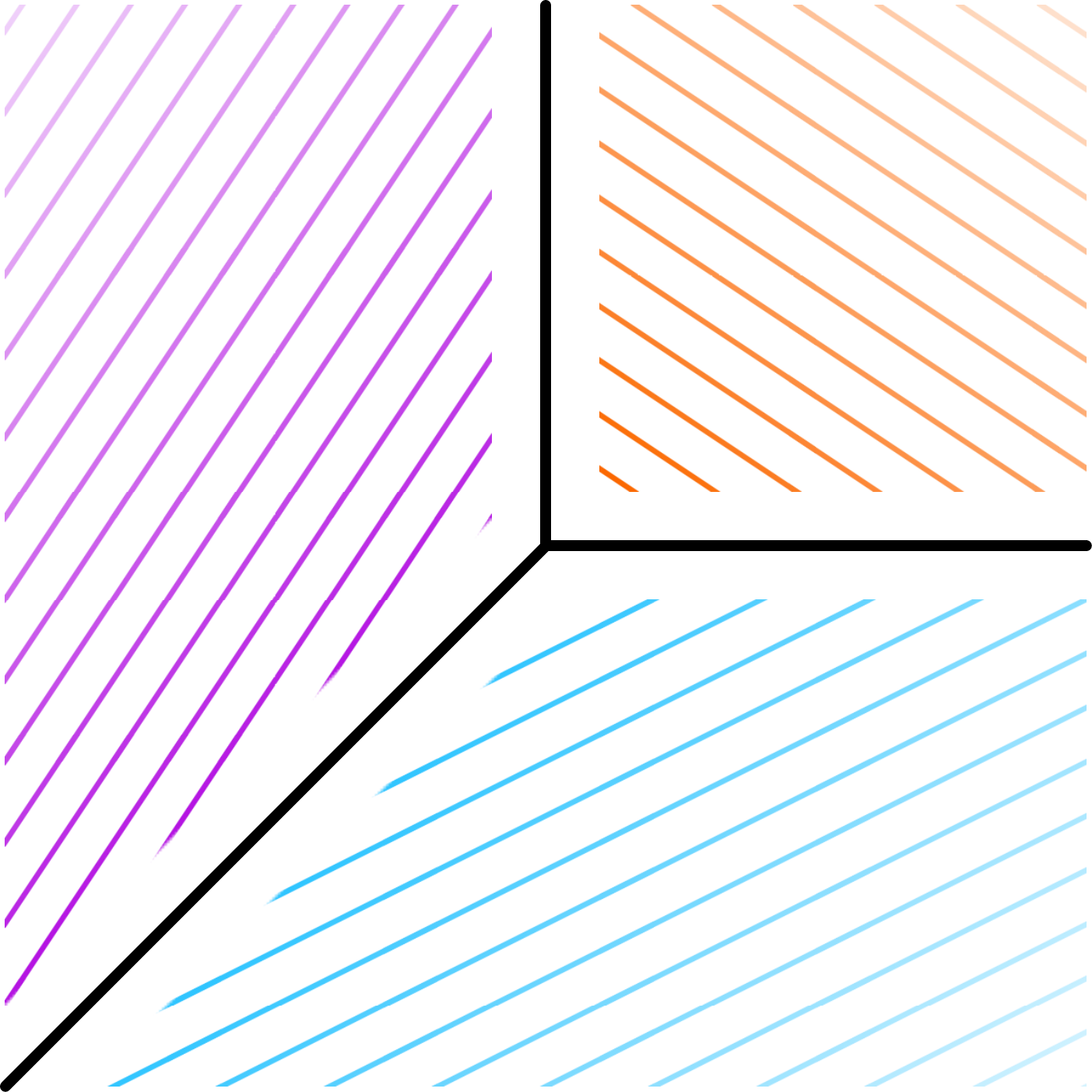}
\hspace{0.5in}
\includegraphics[width=0.4\textwidth]{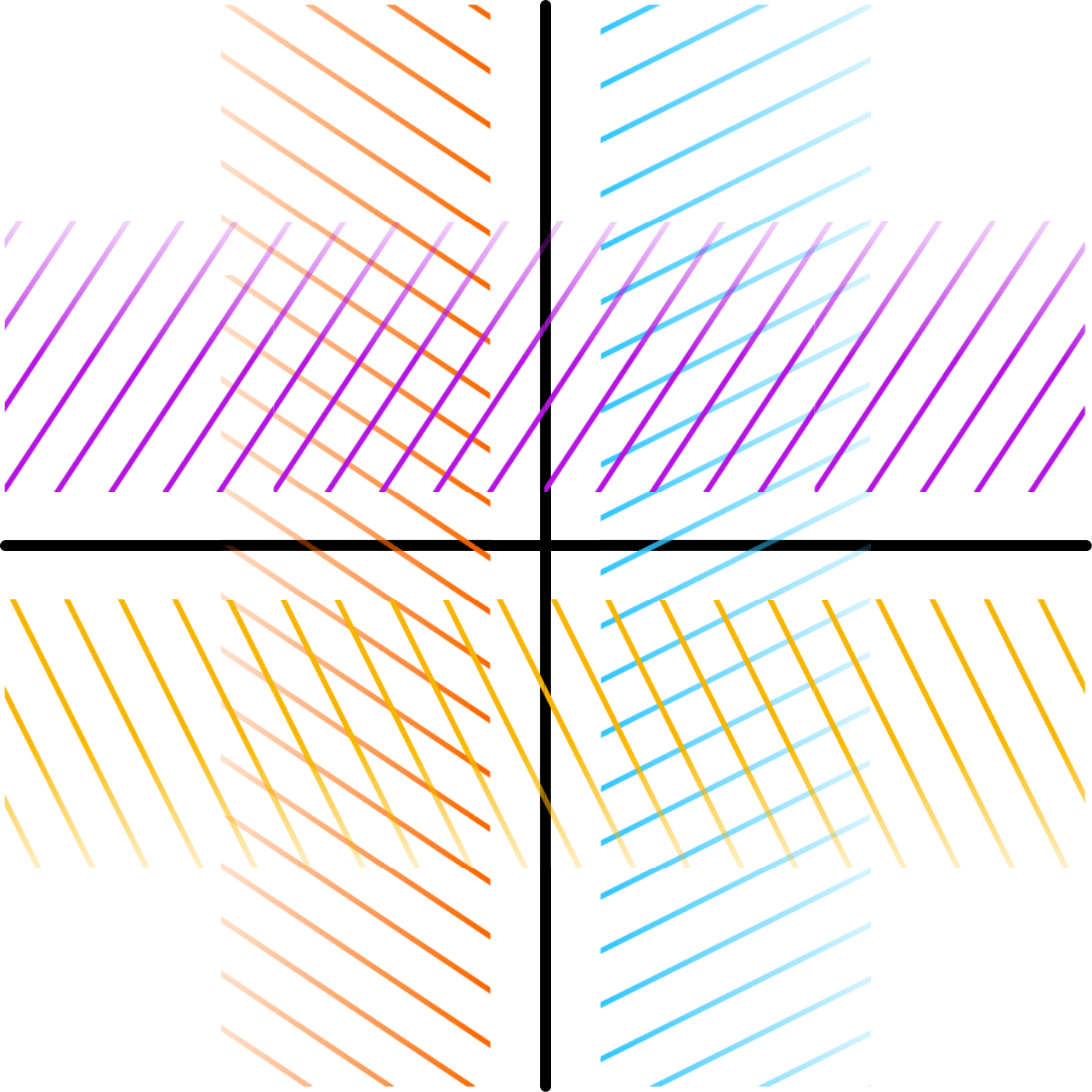}
\caption{Depiction of the families with $d = 2$ in Example~\ref{e:mthelly}(b) (left) and Example~\ref{e:mthelly}(c) (right).}
\label{fig:mthelly}
\end{figure}

We close this section with two corollaries of Proposition~\ref{p:capequivalent}.  The first is an analog of the colorful Helly's theorem~\cite{DimHelly}, which
asserts that given $d+1$ finite families $\mathcal F_1, \ldots, \mathcal F_{d+1}$ of convex sets, if for every choice of transversal $S_{1} \in \mathcal{F}_{1}$, $S_{2} \in \mathcal{F}_{2}$,\ldots, $S_{d+1} \in \mathcal{F}_{d+1}$, the intersection $ S_{1}\cap \dots\cap S_{d+1}$ is nonempty, then for some $j$, the sets in $\mathcal F_j$ have nonempty intersection.  The second is an analog of Tverberg's theorem~\cite{tverberg}, which states that for any set $D$ of $(d+1)(r-1)+1$ points in $\RR^d$, there exists a point $p$ (not necessarily in $D$) and a partition of $D$ into $r$ blocks, such that $p$ belongs to the convex hull of each block.  Note that both are ``partial'' analogs, as all affine semigroups therein are required to be pointed.

\begin{cor}\label{c:colorhelly}
Let $\mathcal{F}_{1},\ldots,\mathcal{F}_{N}$ be finite families of pointed affine semigroups in $\mathbb Z^d$.  If for every choice of a transversal $S_{1} \in \mathcal{F}_{1}$, $S_{2} \in \mathcal{F}_{2}$,\ldots, $S_{d+1} \in \mathcal{F}_{d+1}$, the intersection $ S_{1}\cap \dots\cap S_{d+1}$ contains a non-zero element, then there is a family $\mathcal{F}_{j}$ such that all semigroups in $\mathcal{F}_{j}$ intersect at a non-zero element. 
\end{cor}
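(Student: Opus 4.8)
The plan is to reduce the statement to the classical colorful Helly theorem for convex sets by passing to associated cones, exactly in the spirit of the proof of Main Theorem~\ref{mt:helly}. For each family $\mathcal{F}_i$ replace every pointed affine semigroup $S \in \mathcal{F}_i$ by the punctured cone $\cone(S) \setminus \{0\}$; since $S$ is pointed, $\cone(S)$ is a pointed rational polyhedral cone, so $\cone(S) \setminus \{0\}$ is a convex subset of $\RR^d$. This produces $d+1$ finite families $\mathcal{G}_1, \ldots, \mathcal{G}_{d+1}$ of convex sets in $\RR^d$ (note we only need the first $d+1$ of the $N$ given families, or we may simply take $N = d+1$ as the natural hypothesis; if $N > d+1$ one restricts attention to the first $d+1$ families).

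The first key step is to verify the transversal hypothesis transfers. Given a transversal $C_1 \in \mathcal{G}_1, \ldots, C_{d+1} \in \mathcal{G}_{d+1}$, write $C_j = \cone(S_j) \setminus \{0\}$ with $S_j \in \mathcal{F}_j$. By hypothesis $S_1 \cap \cdots \cap S_{d+1}$ contains a non-zero element, so by Lemma~\ref{l:cpcommute} (or directly Proposition~\ref{p:capequivalent}) $\cone(S_1) \cap \cdots \cap \cone(S_{d+1})$ contains a non-zero element, hence $C_1 \cap \cdots \cap C_{d+1} \neq \nothing$. Thus every transversal of the $\mathcal{G}_i$'s has nonempty intersection, and the colorful Helly theorem for convex sets in $\RR^d$ yields an index $j$ such that all members of $\mathcal{G}_j$ have a common point $p \neq 0$.

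The second key step is to pull this conclusion back to the semigroups. We have $p \in \cone(S) \setminus \{0\}$ for every $S \in \mathcal{F}_j$, i.e. $p \in \bigcap_{S \in \mathcal{F}_j} \cone(S)$ and $p \neq 0$. Applying Lemma~\ref{l:cpcommute} again, $\cone\bigl(\bigcap_{S \in \mathcal{F}_j} S\bigr) = \bigcap_{S \in \mathcal{F}_j} \cone(S)$ contains the non-zero point $p$; then Lemma~\ref{l:ktimesp} (after clearing denominators so that $p$ may be taken integral, using that the cone is rational) gives a positive integer $k$ with $kp \in \bigcap_{S \in \mathcal{F}_j} S$. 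Since $kp \neq 0$, this is the desired non-zero common element, completing the proof.

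I expect the main obstacle — really the only subtlety — to be the punctured-cone convexity bookkeeping: one must be careful that $\cone(S) \setminus \{0\}$ is genuinely convex, which uses pointedness of $S$ in an essential way (for a non-pointed cone containing a line through the origin, deleting $0$ destroys convexity), and one must track the distinction between "nonempty common intersection of punctured cones" and "common \emph{non-zero} element," which is exactly what pointedness buys us. Everything else is a direct invocation of the three semigroup lemmas from this section plus the off-the-shelf colorful Helly theorem, so the argument is short.
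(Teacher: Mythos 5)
Your proposal is correct and follows essentially the same route as the paper: replace each pointed semigroup by its associated cone with the origin removed (convex by pointedness), apply the classical colorful Helly theorem, and transfer the nonzero common point back to the semigroups via Proposition~\ref{p:capequivalent}. The paper's proof is just a terser statement of exactly this reduction, with the transfer in both directions handled by Proposition~\ref{p:capequivalent} rather than by unpacking Lemmas~\ref{l:cpcommute} and~\ref{l:ktimesp} as you do.
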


\begin{proof}
Following the proof of Main Theorem~\ref{mt:helly}(b), 
replacing each affine semigroup with its associated cone with origin removed yields $d+1$ families of convex sets in $\RR^d$, to which one can readily apply colorful Helly's theorem~\cite{JesusSurvey}.  
\end{proof}


\begin{cor}\label{c:tverberg}
Fix a pointed affine semigroup $S = \sg(A) \subset \mathbb{Z}^{d}$ given by $|A| = k$ generators.  If $k \ge d(r-1)+1$, then there exists a $r$-coloring of $S$ such that some element $p \in S$ has a monochromatic solution of every color.  
\end{cor}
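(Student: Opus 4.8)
The plan is to reduce the statement to Tverberg's theorem for the cone $\cone(A)$ and then lift the resulting partition back to the semigroup $S$ using Proposition~\ref{p:capequivalent} (or, more precisely, the mechanism behind it: Lemma~\ref{l:ktimesp}). Write $A = \{a_1, \ldots, a_k\}$ for the generating vectors, viewed as $k$ points in $\RR^d$. Since $k \ge d(r-1)+1$, Tverberg's theorem applied to these $k$ points yields a partition of the index set $\{1,\ldots,k\}$ into $r$ blocks $B_1, \ldots, B_r$ and a point $p \in \RR^d$ with $p \in \convhull\{a_i : i \in B_j\}$ for every $j$. Color the generator $a_i$ with color $j$ whenever $i \in B_j$; this is the desired $r$-coloring of $S$. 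The submatrix $A_j$ consists of the generators colored $j$, and $\sg(A_j)$ is a monochromatic affine subsemigroup of $S$ for color $j$.

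Next I would pass from the convex hull containments to cone containments. For each $j$, the point $p$ lies in $\convhull\{a_i : i \in B_j\} \subseteq \cone\{a_i : i \in B_j\} = \cone(A_j)$; hence $p \in \bigcap_{j=1}^r \cone(A_j)$. The subtlety is that $p$ need not be an integer point (Tverberg's theorem only produces a real point), so one cannot directly say $p \in S$. To fix this, observe that $\bigcap_j \cone(A_j)$ is itself a rational polyhedral cone (a finite intersection of rational cones), so if it contains the nonzero point $p$ it contains a nonzero \emph{rational} point, and after clearing denominators, a nonzero \emph{integral} point $p'$. By Lemma~\ref{l:ktimesp} applied to each affine semigroup $\sg(A_j)$ (whose cone contains $p'$), there is a positive integer $k_j$ with $k_j p' \in \sg(A_j)$; taking $k^* = \lcm(k_1, \ldots, k_r)$ (or just the product), the element $p^* := k^* p'$ lies in $\sg(A_j)$ for \emph{every} $j$ simultaneously, i.e., $p^*$ has a monochromatic solution of every color. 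Since $p^* \in S$, this is the element required by the statement.

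One small point to handle: Tverberg's theorem and the argument above guarantee $p$ is nonzero only if the $r$ cones $\cone(A_j)$ intersect in something other than the origin — but this is automatic here. Indeed, each block $B_j$ is nonempty (a Tverberg partition into $r$ parts has all parts nonempty), so each $\cone(A_j)$ is a nontrivial pointed cone; and the Tverberg point $p$ lies in all of them. If $p = 0$, replace $p$ by any generator; more cleanly, note that $S$ pointed forces $0 \notin \convhull\{a_i : i \in B_j\}$ is \emph{not} guaranteed, so instead I would simply remark that if the common Tverberg point is the origin we may perturb: pick any $j_0$ and any $i_0 \in B_{j_0}$; then $a_{i_0} \in \cone(A_{j_0})$ trivially, but we need it in all the cones. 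The robust route is the one above via $k^* p'$, noting that if $p = 0$ we instead run Tverberg on a generic affine image or simply observe that the statement is about the \emph{existence} of $p \in S$ with a monochromatic representation in each color, and $p = k^* p'$ with $p' \ne 0$ works whenever the Tverberg point is nonzero; the degenerate case $p=0$ can be excluded by choosing the Tverberg partition of the points $\{a_1,\ldots,a_k\}$ after translating them off any common hyperplane through the origin, which does not affect the combinatorial partition. I expect the main obstacle to be precisely this rationality/integrality passage — converting the real Tverberg point into a genuine semigroup element via Lemma~\ref{l:ktimesp} — together with verifying the degenerate $p = 0$ case causes no trouble; the application of Tverberg's theorem itself and the choice of coloring are routine.
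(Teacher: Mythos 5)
There is a genuine gap at the very first step: you apply Tverberg's theorem to the $k$ generators as points in $\RR^d$, but Tverberg's theorem in $\RR^d$ requires $(d+1)(r-1)+1$ points, while the hypothesis only supplies $k \ge d(r-1)+1$. For $r \ge 2$ this is strictly fewer points than Tverberg needs, so the partition with a common point of the \emph{convex hulls} of the blocks need not exist. A concrete counterexample to your intermediate claim: $d=1$, $r=2$, $A=\{1,2\}$, so $k = 2 = d(r-1)+1$; the only partition into two nonempty blocks is $\{1\}, \{2\}$, whose convex hulls are disjoint. The corollary itself still holds there (e.g.\ $2 \in \sg(1) \cap \sg(2)$), which shows the right target is a common point of the \emph{cones} $\cone(A_j)$, not of the convex hulls, and that is a strictly weaker demand.

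The paper's proof exploits exactly this: by Proposition~\ref{p:capequivalent} it suffices to find a partition and a nonzero point in $\bigcap_j \cone(A_j)$, and since $S$ is pointed one can take a cross-section of $\cone(A)$ by an affine hyperplane meeting every ray; scaling each generator onto that hyperplane does not change the cones of the blocks, and one then applies Tverberg in the $(d-1)$-dimensional cross-section, where the required number of points is $((d-1)+1)(r-1)+1 = d(r-1)+1$, matching the hypothesis. Your dimension count misses this reduction, which is precisely why the stated bound is $d(r-1)+1$ rather than $(d+1)(r-1)+1$. The second half of your argument (passing from a nonzero real point of $\bigcap_j \cone(A_j)$ to a rational, then integral point, then using Lemma~\ref{l:ktimesp} and an lcm to land in every $\sg(A_j)$ simultaneously) is correct, but it re-derives Proposition~\ref{p:capequivalent}, which the paper invokes in one line; also, your worry about a degenerate Tverberg point at the origin disappears in the cross-section picture, since the Tverberg point lies on an affine hyperplane avoiding the origin.
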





\begin{proof}

By Proposition \ref{p:capequivalent}, we must show that there exists a partition $A_1, \ldots, A_r$ of $A$ such that some non-zero element $p \in \cone(A)$ lies in $\cone(A_i)$ for each $i$.  
Since $S$ has only the trivial subgroup, $\cone(S)$ is pointed, so by taking a cross-section of $\cone{(A)}$, it is equivalent to show that given a set $D$ of $k$ points in $\mathbb R^{d-1}$, there exists a $r$-coloring of $D$ and a point $p$ that lies in the convex hull of each color class. 
Since $k \ge ((d-1)+1)(r-1)+1$, this is exactly the statement of Tverberg's theorem.
\end{proof}

\begin{prob}\label{prob:hellynonpointed}
Generalize Corollaries~\ref{c:colorhelly} and~\ref{c:tverberg} to families of (not necessarily pointed) affine semigroups.
\end{prob}

\section{Carath\'eodory type theorems for semigroups}
\label{sec:caratheodorysemigroups}

The semigroup version of the colorful Carath\'eodory theorem fails strongly.  We provide two counterexamples; Table~\ref{tb:cteg} contains an example of one, and Example~\ref{e:cteg1} illustrates another.

\begin{table}[t!]
\centering
\begin{tabular}{ c | c @{\ } c @{\ } c }
    & $g_i$ & $g_i'$ & $g_i''$  \\
    \hline
$S_{1}$   &  (0,1,2)     &   (1,7,9)      &    (2,9,9)  \\
$S_{2}$   &  (0,3,4)     &   (1,9,11)     &    (2,5,5)  \\
$S_{3}$   &  (0,7,8)     &   (1,13,15)     &    (2,-3,-3)  \\
$S_{4}$   &  (0,15,16)   &   (1,21,23)    &    (2,-19,-19)  \\
$S_{5}$   &  (0,31,32)   &   (1,37,39)    &    (2,-51,-51)  \\
$S_{6}$   &  (0,63,64)   &   (1,69,71)    &    (2,-115,-115)
\end{tabular}
\caption{The family of semigroups in Proposition~\ref{p:cteg1} with $n = 6$.}
\label{tb:cteg}
\end{table}


\begin{prop}\label{p:cteg1}
Fix $n \ge 1$, and consider the family of semigroups $\mathcal{F}_n = \{ S_i = \sg(g_i, g_i', g_i'') \}$, where
$$
g_i = (0, 2^i - 1, 2^i),
\quad
g_i' = (1, n + 2^i - 1, n + 2^i + 1),
\quad
g_i'' = (2, 2(n - 2^i) + 1, 2(n - 2^i) + 1)
$$
and $1 \le i \le n$.  Letting $p = (3, 3n - 1, 3n + 2)$, we have $p \in S_i$ for each $i$, and 
the only expressions for $p$ as a sum of generators from across the $S_i$'s are those of the form 
$$p = g_i + g_i' + g_i''$$
for each $i$.  
\end{prop}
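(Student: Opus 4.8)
The plan is to verify the membership $p \in S_i$ by exhibiting the single relation $p = g_i + g_i' + g_i''$, and then to show this is the \emph{only} way to write $p$ as a non-negative integer combination of generators drawn from all the $S_j$'s simultaneously. The verification that $g_i + g_i' + g_i'' = (0+1+2,\ (2^i-1)+(n+2^i-1)+(2(n-2^i)+1),\ 2^i+(n+2^i+1)+(2(n-2^i)+1)) = (3, 3n-1, 3n+2) = p$ is a direct computation in each coordinate, which I would record in one line. The substance of the proposition is the uniqueness claim, and that is where the argument must be organized carefully.

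For uniqueness, suppose $p = \sum_{j} (a_j g_j + b_j g_j' + c_j g_j'')$ with all coefficients in $\ZZ_{\ge 0}$. The first coordinate gives $\sum_j (0 \cdot a_j + 1 \cdot b_j + 2 \cdot c_j) = 3$, so the multiset of ``types'' used (counting $g_j'$ with weight $1$ and $g_j''$ with weight $2$, and $g_j$ with weight $0$) has total weight exactly $3$. This forces one of two shapes: either exactly one $g_j'$ and one $g_j''$ are used (each with coefficient $1$), or three $g_j'$'s are used (with multiplicity, i.e.\ $\sum b_j = 3$ and $\sum c_j = 0$), together with an arbitrary number of the weight-$0$ generators $g_j$ in either case. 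I would then use the third-minus-second coordinate, $g_j'' - g_j' $ and $g_j$ contribute $0$ to the difference of the last two coordinates while $g_j'$ contributes $+2$ — wait, more precisely: the third coordinate minus the second is $1$ for $g_j$, $2$ for $g_j'$, $0$ for $g_j''$, and for $p$ it is $3$. Combined with the first-coordinate constraint this already pins down which configurations of $g'$- and $g''$-generators are possible. In the ``one $g_j'$, one $g_k''$'' case the first coordinate is automatically $3$ and the (third $-$ second) coordinate is $2 + (\#\text{of }g\text{'s used})$, forcing exactly one $g_\ell$ to appear; then the remaining (say) second-coordinate equation, which is roughly additive in $2^i$ with the $g''$ term carrying $-2^{k+1}$, forces $i = j = k = \ell$ by a uniqueness-of-binary-representation / size argument. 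The ``three $g'$'s'' case (and any case with a $g''$ but no $g'$, or two $g''$'s, etc.) must be ruled out by showing the second coordinate cannot then equal $3n-1$: here the $g_j''$ entries $2(n-2^i)+1$ become negative for $2^i > n$, and the $2^i-1$ and $n+2^i-1$ terms grow, so a careful inequality bounding $\sum$ of second coordinates away from $3n-1$ finishes it.

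The main obstacle is the bookkeeping in the second coordinate: once the first and third coordinates have restricted the generator types, one is left with a linear Diophantine equation whose coefficients involve both $n$ and the powers $2^i$, and one must argue that the only non-negative solution is the ``diagonal'' one $i=j=k$. I expect the cleanest route is to substitute the first-coordinate and (third minus second)-coordinate relations to eliminate variables, reducing to a single equation of the form $\sum_j (2^j - 1) a_j + \text{(two terms)} = $ const, and then invoke uniqueness of binary expansions together with the sign pattern of the $g_j''$ (negative large entries for large $i$) to force all indices to coincide. I would present this as: (1) compute $p = g_i+g_i'+g_i''$; (2) set up the general expression and extract the first and (third $-$ second) coordinate identities; (3) enumerate the finitely many type-patterns these allow; (4) in each surviving pattern, solve the remaining second-coordinate equation and show the diagonal solution is unique, using binary representations and size/sign estimates on the $2(n-2^i)+1$ entries.
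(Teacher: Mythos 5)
Your proposal is correct and follows essentially the same route as the paper's proof: the first coordinate together with the difference of the last two coordinates forces exactly one generator of each type $g$, $g'$, $g''$ to appear, and the second coordinate then yields $2^i + 2^j = 2^{k+1}$, which forces $i=j=k$. The only (cosmetic) difference is that the degenerate patterns you propose to exclude via second-coordinate size estimates (three $g'$'s, a $g''$ with no $g'$, two $g''$'s) are already eliminated by the two linear constraints $b+2c=3$ and $a+2b=3$ on the type multiplicities, so no inequalities are needed there.
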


\begin{proof}
Consider an arbitrary expression for $p$ as a sum of generators from the $S_i$'s.  We~claim any expression for $p$ must have the form $p = g_i + g_j' + g_k''$, where $i$, $j$, and $k$ are not necessarily distinct.  Indeed, some generator $g_j'$ must appear, since the first coordinate of $p$ is odd, and from there, some generator $g_i$ must appear since the last 2 coordinates of $p$ differ by 3.  The~first coordinate of $p$ then forces the third and final generator in the expression to have the form $g_k''$.  This proves the claim.  Examining second coordinates in any such expression, we see $2^i + 2^j = 2^{k+1}$, which is impossible unless $i = j = k$.  
\end{proof}

\begin{example}\label{e:cteg1}
Let $\mathcal{I}_{1} = \{1,2,3\}$, $\mathcal{I}_{2} = \{4,5,6\}$, $\mathcal{I}_{3} = \{7,8,9\}$, and
$$A = \left[\begin{array}{c c c|c c c|c c c} 0 & 1 & 2 & 0 & 1 & 2 & 0 & 1 & 2\\ 0 & 32 & 63 & 1 & 33 & 61 & 3 & 35 & 57\\ 1 & 34 & 63 & 2 & 35 & 61 & 4 & 37 & 57 \end{array}\right] \!\!.$$
The element $p = (3, 95, 98)$ has monochromatic solutions 
$$
(1,1,1,0,0,0,0,0,0), 
\quad
(0,0,0,1,1,1,0,0,0),
\quad \text{ and } \quad
(0,0,0,0,0,0,1,1,1),
$$
but no $3$-chromatic solutions.
\end{example}

\begin{proof}[Proof of Main Theorem \ref{mt:failureofcaratheodory}]
Proposition~\ref{p:cteg1} implies part~(a) upon noting that all affine semigroups therein are normal since their generating matrices have determinant $-1$ (see \cite[Chapter~8, Corollary~2.6]{barvinokconvexity}).  For part~(b), for each family $\mathcal{F}_n$ in Proposition~\ref{p:cteg1}, consider the family 
$$\mathcal F' = \{S \times \ZZ_{\ge 0} : S \in \mathcal{F}_n \},$$
of semigroups of the form
$$
S \times \ZZ_{\ge 0} = \sg((g_1,0), \ldots, (g_r,0),  (0,1))
\qquad \text{whenever} \qquad
S = \sg(g_1, \ldots, g_r).
$$
For each $k \ge 1$, the element $(p,k)$ lies in $S \times \ZZ_{\ge 0}$ for each $S \in \mathcal F$, and the only expressions for $(p,k)$ as a sum of generators of the semigroups in $\mathcal F'$ are obtained by concatenating an expression for $(p,0)$ with $k$ copies of $(0,1)$. According to the proof of Proposition~\ref{p:cteg1}, the only way to generate $(p,0)$ is $(g_{i},0) + (g_{i}',0) + (g_{i}'',0)$ for some index $i$. This solution violates the condition of being colorful because three different vectors of the same color are part of the expression for $(p,k)$.
Note that in the previous construction, $(0,1)$ appeared many times with different colors. The definition of colorful is violated  because the vectors $(g_{i},0),(g_{i}',0), (g_{i}'',0)$ are of the same color. 
\end{proof}

The family in the above proof can be adjusted to use different vectors, of different colors, by replacing the instances of $(0,1)$ with vectors from $(0,1),(0,2),(0,3),\ldots$, so that the infinitely many vectors $(p,k \lcm(1,2,3,\dots,s))$ still do not have a colorful representation since $(p,0)$ does not.


\begin{proof}[Proof of Main Theorem~\ref{mt:colorcaratheodory}]
Let $S = \sg(A_1, \ldots, A_\ell)$ and $S' = \bigcap_{i=1}^\ell \sg(A_i)$.  If an element $b \in S$ has a monochromatic solution of each color, then $b \in S'$, so it suffices to prove there are only finitely many elements of $S'$ with no chromatic solution.
%
%
By Lemma~\ref{l:fgcapfg}, $S'$ is finitely generated, say with minimal generating set $G$.  
Therefore, each $b \in S'$ can be written as $b = \sum_{g \in G} \lambda_g g$ with each $\lambda_g \in \mathbb Z_{\ge 0}$.
We will prove that if $\sum_g \lambda_g \ge \ell$, then $b$ has a chromatic solution.  In fact, under this assumption, we can collect terms in this sum to form an expression 
$b = s_1 + \cdots + s_\ell$
as a sum of $\ell$ nonzero elements of $S'$.  Each $s_i$ thus has a monochromatic solution in $S$ of color $i$, and concatenating these monochromatic solutions yields a chromatic solution for $b$.  
\end{proof}

\section{Colored numerical semigroups}
\label{sec:numericalsemigroups}

In this section, we turn our attention to colored numerical semigroups and the chromatic Frobenius problem.  
Before restricting to this case, however, we prove the following general result, which forms the backbone of the proof of Main Theorem~\ref{mt:numerical} but holds for any colored affine semigroup.

\begin{thm}\label{t:structure}
For a colored affine semigroup $S = \sg(A_{1},\ldots,A_{\ell})$, the set 
$$S(A,k) = \{Ax : x \text{ is $k$-chromatic}\}$$
equals the union of finitely many translated copies of $S$.
\end{thm}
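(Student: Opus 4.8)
The plan is to prove Theorem~\ref{t:structure} by decomposing the set of $k$-chromatic vectors $x$ according to which colors they use, and then handling each such piece by a pigeonhole argument on a single "generic" sub-semigroup. Let me think about what needs to happen. A vector $x$ (a nonnegative integer vector indexed by the columns of $A$) is $k$-chromatic if its support meets at least $k$ of the $\ell$ color classes $\mathcal{I}_1, \ldots, \mathcal{I}_\ell$. The set of all such $x$ is a finite union over subsets $T \subseteq \{1,\ldots,\ell\}$ with $|T| = k$ of the sets $\{x : \supp(x) \cap \mathcal{I}_i \neq \nothing \text{ for all } i \in T\}$; since a finite union of finite unions of translates of $S$ is again such a union, it suffices to fix one $k$-subset $T$ and prove the claim for $S(A,T) := \{Ax : \supp(x) \cap \mathcal{I}_i \neq \nothing \text{ for all } i \in T\}$. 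By relabeling, take $T = \{1, \ldots, k\}$.

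**The core pigeonhole step.**

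The key observation is this: a vector $x$ lies in $S(A,T)$ precisely when there is some choice of one column index $j_i \in \mathcal{I}_i$ for each $i \in T$ with $x_{j_i} \geq 1$. There are finitely many such "selector" tuples $\mathbf{j} = (j_1, \ldots, j_k)$, so it suffices to prove that for each fixed $\mathbf{j}$, the set $\{Ax : x_{j_i} \geq 1 \text{ for all } i\}$ is a finite union of translates of $S$. But this last set is exactly $(a_{j_1} + \cdots + a_{j_k}) + S$, a \emph{single} translate of $S$ — because any $x$ with $x_{j_i} \geq 1$ for all $i$ can be written as $x = e_{j_1} + \cdots + e_{j_k} + x'$ with $x' \geq 0$, so $Ax = (a_{j_1} + \cdots + a_{j_k}) + Ax'$ and $Ax' \in S$; conversely every element of that translate arises this way. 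Taking the union over all selectors $\mathbf{j}$ and then over all $k$-subsets $T$ gives the theorem. In fact this shows something slightly cleaner than a pigeonhole bound is needed: $S(A,k) = \bigcup_{T, \mathbf{j}} \left( \sum_{i \in T} a_{j_i} + S \right)$, a finite union of translates, with the number of translates at most $\binom{\ell}{k}$ times the product of the color class sizes.

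**Where the subtlety lies, and a caveat.**

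I should double-check the one place this could go wrong: the claim "$\{Ax : x_{j_i} \geq 1 \ \forall i\} = (\sum_i a_{j_i}) + S$" requires that the $j_i$ are \emph{distinct} indices, which they are since they lie in distinct color classes $\mathcal{I}_i$; so subtracting $e_{j_1} + \cdots + e_{j_k}$ from $x$ keeps all coordinates nonnegative, and no coordinate is being decremented twice. (If we instead wanted $k$-chromatic with repeated colors allowed the bookkeeping would be the same, but here the distinctness is automatic.) The only genuinely delicate point is making sure the translates are translates \emph{of $S$ itself} and not of some sub-semigroup: this is immediate because $x' $ ranges over \emph{all} nonnegative integer vectors once the constraint $x_{j_i}\geq 1$ is absorbed into the translation, so $Ax'$ ranges over all of $S = \sg(A_1,\ldots,A_\ell)$. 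So I expect no real obstacle here — the "hard part" is purely organizational: setting up the double indexing over $k$-subsets and selectors cleanly, and observing that a finite union of translates of $S$ is closed under finite unions, so the reductions compose. The resulting statement is even effective, which is presumably why it serves as the backbone for Main Theorem~\ref{mt:numerical}: finiteness of the non-$k$-chromatic elements will follow by intersecting the complement of this union with $S$.
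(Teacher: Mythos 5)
Your proof is correct, but it takes a different route from the paper's. The paper's argument pulls $S(A,k)$ back to the set $E \subseteq \ZZ_{\ge 0}^n$ of $k$-chromatic exponent vectors, observes that $E$ is upward closed under adding standard basis vectors, and invokes Dickson's lemma to get finitely many minimal elements $x_1,\ldots,x_r$, so that $E = \bigcup_j (\ZZ_{\ge 0}^n + x_j)$ and applying $A$ finishes the proof; the translates are not identified explicitly. You instead decompose directly by which $k$-subset $T$ of colors is used and by a selector $\mathbf{j}=(j_i)_{i\in T}$ with $j_i \in \mathcal{I}_i$, and verify the exact identity $\{Ax : x_{j_i}\ge 1 \ \forall i\in T\} = \sum_{i\in T} a_{j_i} + S$ (your check that the $j_i$ are distinct, so subtracting $\sum_i e_{j_i}$ stays nonnegative, is the right point to flag). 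This yields the explicit description $S(A,k)=\bigcup_{T,\mathbf{j}}\bigl(\sum_{i\in T} a_{j_i}+S\bigr)$, which is precisely the sharper statement $S(A,k)=\bigcup_{v\in m(A,k)}(S+v)$ that the paper asserts without detailed justification in the proof of Corollary~\ref{c:welldef}; so your argument is effective and in fact proves what that corollary needs, at the cost of more bookkeeping (at most $\binom{\ell}{k}\prod_i |\mathcal{I}_i|$ translates). The paper's Dickson's-lemma argument is shorter and applies verbatim to any upward-closed defining condition on exponent vectors, but is non-constructive about the translation vectors. Either proof is acceptable; yours trades generality for explicitness.
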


\begin{proof}
Consider the map $\varphi:\ZZ_{\ge 0}^n \to \sg(A)$ sending each standard basis vector $e_i$ to the $i$'th column $Ae_i$ of $A$, and let 
$$E = \varphi^{-1}(S(A,k)) = \{x \in \ZZ_{\ge 0}^n : x \text{ is $k$-chromatic}\}.$$
Note that $E$ is closed under the additive action of $\ZZ_{\ge 0}$, as $x + e_i$ is nonzero in every entry that $x$ is nonzero.  By Dickson's lemma~\cite{Dickson}, any subset of $\ZZ_{\ge 0}^n$ has finitely many minimal elements under the component wise partial order, so 
$$
E = (\ZZ_{\ge 0}^n + x_1) \cup \cdots \cup (\ZZ_{\ge 0}^n + x_r)
$$
for some $x_1, \ldots, x_r \in E$.  Applying $\varphi$ to the above equality completes the proof.  
\end{proof}

\subsection{Chromatic Frobenius numbers}
\label{subsec:coloredfrobenius}

For the remainder of this section, fix a colored numerical semigroup $S = \sg(A_{1},\ldots,A_{\ell})$, where $A_1, \ldots, A_{\ell}$ partition $A = \{a_{1},\ldots,a_{n}\}$ with $\gcd(A) = 1$.  The~set of \emph{gaps} of $S$, denoted $G(A) = \ZZ_{\ge 0} \setminus S$, is then a finite set with $F(A) = \max(G(A))$ (this follows from B\'ezout's identity, see~\cite{Rosales}). 
Analogously, the \emph{$k$-chromatic gaps} are the integers in the set $G(A,k) = \ZZ_{\ge 0} \setminus S(A,k)$, so that 
$\mathsf{CF}_{k}(A_{1},\ldots,A_{\ell}) = \max(G(A,k))$. 

The following provides upper and lower bounds for $\mathsf{CF}_{k}(A_{1},\ldots,A_{\ell})$, and in particular verifies $G(A,k)$ is a finite set, as claimed in Main Theorem~\ref{mt:numerical}\eqref{colorFrobeniusFT}.

\begin{cor}\label{c:welldef}
The colored Frobenius number satisfies
$$\min(m(A,k)) - 1 \leq  \mathsf{CF}_{k}(A_{1},\ldots,A_{\ell}) \leq  \min(m(A,k)) + F(A),$$
where $m(A,k) = \bigcup_{|I| = k} \sum_{i \in I} A_{i}$.  
\end{cor}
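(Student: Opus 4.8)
The plan is to prove the two inequalities separately, each by a short direct argument; both rely on the fact that the generators $a_1,\ldots,a_n$ are strictly positive and that $\ZZ_{\ge 0}\setminus S = G(A)$ has largest element $F(A)$. First recall what $m(A,k)$ records: an integer lies in $m(A,k)$ exactly when it can be written as $a_{j_1}+\cdots+a_{j_k}$ for generators drawn from $k$ \emph{distinct} color classes, one per class, each with coefficient $1$. In other words $m(A,k)$ is the set of cheapest possible $k$-chromatic elements, and $\mu := \min(m(A,k))$ is the value of the cheapest way to touch $k$ colors. (We tacitly assume $k\le \ell$ so that $m(A,k)\ne\nothing$; otherwise there are no $k$-chromatic elements at all and the statement is vacuous.)

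For the lower bound I would show $\mu-1\in G(A,k)$. Suppose $x\in\ZZ_{\ge 0}^n$ is $k$-chromatic. Then $\supp(x)$ meets at least $k$ of the classes $\mathcal{I}_1,\ldots,\mathcal{I}_\ell$; picking $k$ such classes $\mathcal I_{i_1},\ldots,\mathcal I_{i_k}$ and one index $j_t\in\supp(x)\cap\mathcal I_{i_t}$ for each (the $j_t$ are distinct since the classes are disjoint), we get $Ax = \sum_i x_ia_i \ge \sum_{t=1}^k x_{j_t}a_{j_t} \ge \sum_{t=1}^k a_{j_t} \ge \mu$, using $a_i>0$, $x_{j_t}\ge 1$, and $\sum_t a_{j_t}\in m(A,k)$. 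Hence no $k$-chromatic solution yields a value below $\mu$, so the nonnegative integer $\mu-1$ (nonnegative since $\mu$ is a sum of $k\ge 1$ positive integers) lies in $G(A,k)$, giving $\mathsf{CF}_k(A_1,\ldots,A_\ell)=\max(G(A,k))\ge \mu-1$.

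For the upper bound I would show every integer $b>\mu+F(A)$ lies in $S(A,k)$. Fix generators $a_{j_1},\ldots,a_{j_k}$ of distinct colors $i_1,\ldots,i_k$ with $\mu=a_{j_1}+\cdots+a_{j_k}$. Then $b-\mu>F(A)$ and $b-\mu\ge 0$ (as $F(A)\ge -1$ under the usual convention $F=-1$ when $S=\ZZ_{\ge 0}$), so $b-\mu\in S=\sg(A)$, say $b-\mu=Ay$ with $y\in\ZZ_{\ge 0}^n$. Then $x:=y+e_{j_1}+\cdots+e_{j_k}$ satisfies $Ax=b$ and uses the $k$ distinct colors $i_1,\ldots,i_k$, so $x$ is $k$-chromatic and $b\in S(A,k)$, i.e.\ $b\notin G(A,k)$. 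Since also $G(A,k)\subseteq\ZZ_{\ge 0}$, this shows $G(A,k)$ is finite (reproving the finiteness claim of Main Theorem~\ref{mt:numerical}\eqref{colorFrobeniusFT}, which alternatively follows from Theorem~\ref{t:structure}) and that $\mathsf{CF}_k(A_1,\ldots,A_\ell)=\max(G(A,k))\le \mu+F(A)$.

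Both halves are elementary, so there is no serious obstacle; the only points requiring care are the degenerate case $k>\ell$ (excluded above) and the essential use of positivity of the $a_i$ in the lower bound — which is precisely why this clean two-sided bound is particular to numerical semigroups and does not transfer verbatim to general colored affine semigroups.
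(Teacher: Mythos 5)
Your proof is correct and follows essentially the same route as the paper: the paper invokes the decomposition $S(A,k) = \bigcup_{v \in m(A,k)} (S + v)$ (via Theorem~\ref{t:structure}) and reads off both bounds, while you prove exactly the two directions of that decomposition inline — any $k$-chromatic value is at least $\min(m(A,k))$, and any $b > \min(m(A,k)) + F(A)$ is reached by adding one generator from each of $k$ distinct color classes to an element of $S$. Your explicit handling of the edge cases ($k \le \ell$, $\mu - 1 \ge 0$) is a fine addition but does not change the substance.
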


\begin{proof}
By Theorem~\ref{t:structure}, $S(A,k)$ equals the union of finitely many translates of copies of $S$.  In fact, more can be said, as
$$S(A,k) = \bigcup_{v \in m(A,k)} S + v.$$
Therefore, if $b \leq \min(m(A,k)) - 1$, then $b \not\in S(A,k)$ since $b \not\in S + v$ for any $v \in m(A,k)$,
and if $b > \min(m(A,k)) + F(A)$, then $b \in S(A,k)$ since $b - \min(m(A,k)) \in S$.
\end{proof}

\begin{proof}[Proof of Main Theorem~\ref{mt:numerical}\eqref{colorFrobeniusFT}]
Apply Corollary~\ref{c:welldef}.  
\end{proof}

We also obtain the following chromatic generalization of the well-known formula $F(a,b) = ab - (a + b)$, which holds whenever $\gcd(a,b) = 1$.  

\begin{cor}\label{c:singletons}
We have
$$ \mathsf{CF}_\ell(\{a_1\}, \ldots, \{a_\ell\}) = a_1 + \cdots + a_\ell + F(A),$$
and in particular $\mathsf{CF}_{2}(\{a_1\},\{a_2\})  = a_1a_2$.
\end{cor}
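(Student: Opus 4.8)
The plan is to derive Corollary~\ref{c:singletons} directly from Corollary~\ref{c:welldef} by computing the quantity $m(A,k)$ in the special case where each color class is a singleton and $k = \ell$. When $A_i = \{a_i\}$ for each $i$, the only subset $I \subseteq \{1,\dots,\ell\}$ with $|I| = \ell$ is $I = \{1,\dots,\ell\}$ itself, and $\sum_{i \in I} A_i = \{a_1 + \cdots + a_\ell\}$ is a single point. Hence $m(A,\ell) = \{a_1 + \cdots + a_\ell\}$, so $\min(m(A,\ell)) = a_1 + \cdots + a_\ell$, and the upper and lower bounds in Corollary~\ref{c:welldef} collapse: the lower bound gives $\mathsf{CF}_\ell \ge a_1 + \cdots + a_\ell + F(A) - ( \text{wait})$ — more precisely, the two bounds are $\min(m(A,\ell)) - 1 \le \mathsf{CF}_\ell \le \min(m(A,\ell)) + F(A)$, which do not immediately coincide, so a small additional argument is needed to pin down the exact value.

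So the second step is to sharpen the estimate using the explicit description $S(A,\ell) = \bigcup_{v \in m(A,\ell)} (S + v)$ established inside the proof of Corollary~\ref{c:welldef}. With $m(A,\ell)$ a single point $\mu := a_1 + \cdots + a_\ell$, this reads $S(A,\ell) = S + \mu$. Therefore $b$ is an $\ell$-chromatic gap exactly when $b \notin S + \mu$, i.e. exactly when $b - \mu \notin S$, i.e. exactly when $b - \mu$ is a gap of $S$ \emph{or} $b - \mu < 0$. The largest such $b$ is $\mu + \max(G(A)) = \mu + F(A)$, since $F(A) = \max(G(A)) \ge 0$ guarantees this exceeds $\mu - 1$. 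This yields $\mathsf{CF}_\ell(\{a_1\},\dots,\{a_\ell\}) = a_1 + \cdots + a_\ell + F(A)$ exactly.

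For the final claim, I would specialize to $\ell = 2$ with $A = \{a_1, a_2\}$ and $\gcd(a_1,a_2) = 1$, and invoke the classical Sylvester--Frobenius formula $F(a_1,a_2) = a_1 a_2 - (a_1 + a_2)$, which is cited in the excerpt as $F(a,b) = ab - (a+b)$. Substituting gives $\mathsf{CF}_2(\{a_1\},\{a_2\}) = a_1 + a_2 + a_1 a_2 - (a_1 + a_2) = a_1 a_2$, as desired.

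The argument is essentially a routine unwinding of definitions, so there is no serious obstacle; the only point requiring a little care is the jump from the two-sided bound in Corollary~\ref{c:welldef} to the exact value, where one must reuse the finer identity $S(A,k) = \bigcup_{v \in m(A,k)} (S+v)$ rather than the bounds themselves. One should also double-check the boundary case of this identity when $F(A) = 0$ (i.e. $S = \ZZ_{\ge 0}$, which happens iff some $a_i = 1$): there the formula still reads $\mathsf{CF}_\ell = a_1 + \cdots + a_\ell + 0$, and indeed $S + \mu = \mu + \ZZ_{\ge 0}$ has largest complement-element $\mu - 1 = a_1 + \cdots + a_\ell - 1$; but $F(A) = \max(G(A))$ requires $G(A) \ne \nothing$, so when $S = \ZZ_{\ge 0}$ one interprets $F(A) = -1$ by the usual convention and the formula remains consistent. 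I would note this convention briefly rather than belabor it.
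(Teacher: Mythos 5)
Your proposal is correct and follows essentially the same route as the paper: both specialize the identity $S(A,\ell) = \bigcup_{v \in m(A,\ell)}(S+v)$ from the proof of Corollary~\ref{c:welldef} to the singleton case to get $S(A,\ell) = S + a_1 + \cdots + a_\ell$, read off $\mathsf{CF}_\ell = a_1+\cdots+a_\ell+F(A)$, and then apply the classical formula $F(a,b)=ab-(a+b)$ for $\ell=2$. Your extra remark on the $S=\ZZ_{\ge 0}$ edge case (with the $F(A)=-1$ convention) is a fine, if optional, refinement that the paper does not bother with.
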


\begin{proof}
Proceeding as in the proof of Corollary~\ref{c:welldef}, if each $A_{i}$ is a singleton, 
$$S(A,\ell) = S + a_1 + \cdots + a_\ell,$$
and as such, $\mathsf{CF}_{\ell}(\{a_{i}\},\ldots,\{a_{\ell}\}) = a_1 + \cdots + a_\ell + F(A)$.  When $\ell = 2$, this then yields $\mathsf{CF}_{2}(\{a\},\{b\}) = F(A) + a + b =ab.$
\end{proof}

\begin{remark}\label{e:nonsingletons}
The chromatic Frobenius number is not always represented as the Frobenius number and some generators from each color class. For instance, $\mathsf{CF}_{2}(\{a,c\},\{b\}) = ab$ whenever $c > ab$.  
\end{remark}

Before proving Main Theorem~\ref{mt:numerical}\eqref{colorFrobeniusNP}, we prove the following lemma.  

\begin{lemma}\label{l:estimate}
If $1 \leq k < \ell$, then
$\mathsf{CF}_{k}(A) \leq  \mathsf{CF}_{k+1}(A)$.
Moreover, if $\gcd(A \setminus A_i) = 1$, so that $F(A \setminus A_{i})$ and $\mathsf{CF}_{\ell-1}(A_{1},\ldots,A_{i-1},A_{i+1},\ldots,A_{\ell})$ both exist, then 
\begin{align*}
\mathsf{CF}_{\ell}(A_{1},\ldots,A_{\ell})
& \leq \mathsf{CF}_{\ell-1}(A_{1},\ldots,A_{i-1},A_{i+1},\ldots,A_{\ell}) + \min{A_{i}} \\
& \leq \mathsf{CF}_{\ell}(A_{1},\ldots,A_{\ell}) + F(A \setminus A_{i}) + 1.
\end{align*}
\end{lemma}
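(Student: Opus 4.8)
The plan is to prove the three stated inequalities one at a time, using the description of $k$-chromatic elements from Corollary~\ref{c:welldef}, namely that $S(A,k) = \bigcup_{v \in m(A,k)} (S + v)$ with $m(A,k) = \bigcup_{|I|=k} \sum_{i\in I} A_i$, together with the characterization $\mathsf{CF}_k(A) = \max(\ZZ_{\ge 0}\setminus S(A,k))$.

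\emph{First inequality $\mathsf{CF}_k(A) \le \mathsf{CF}_{k+1}(A)$ for $1 \le k < \ell$.} I would argue this by showing the reverse containment of the gap sets, i.e. that every $k$-chromatic element is $(k+1)$-chromatic\,--\,no wait, that is false; rather $S(A,k+1)\subseteq S(A,k)$, since a $(k+1)$-chromatic solution uses at least $k+1 \ge k$ colors and is therefore also $k$-chromatic. Hence $G(A,k) = \ZZ_{\ge 0}\setminus S(A,k) \subseteq \ZZ_{\ge 0}\setminus S(A,k+1) = G(A,k+1)$, and taking maxima gives $\mathsf{CF}_k(A)\le \mathsf{CF}_{k+1}(A)$ (both maxima exist by Corollary~\ref{c:welldef}, once we know $\gcd(A)=1$, which is the standing hypothesis of the section; note the lemma as stated lives in that section so $\mathsf{CF}_k(A)$ is well defined for every $k\le\ell$). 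One small point to check: the inequality is only asserted for $k<\ell$ because $\mathsf{CF}_{\ell+1}$ need not make sense, but the containment argument itself is valid for all $k$.

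\emph{Second inequality (the upper bound on $\mathsf{CF}_\ell$).} Write $A' = A\setminus A_i$ and $\ell' = \ell-1$ colors. I claim that if $b > \mathsf{CF}_{\ell-1}(A') + \min A_i$, then $b$ has an $\ell$-chromatic solution in $S$. Indeed, $b - a \ge \mathsf{CF}_{\ell-1}(A') + 1$ where $a = \min A_i$, so $b-a$ lies in $S(A',\ell-1)$: it has a solution using all $\ell-1$ colors of $A'$. Appending one copy of the generator $a\in A_i$ produces a solution of $b$ using all $\ell$ colors, hence $\ell$-chromatic; so $b\in S(A,\ell)$. Therefore $\max(G(A,\ell)) \le \mathsf{CF}_{\ell-1}(A') + \min A_i$, which is the first displayed inequality.

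\emph{Third inequality (the lower bound, i.e. $\mathsf{CF}_{\ell-1}(A')+\min A_i$ is not much larger than $\mathsf{CF}_\ell(A)$).} Set $g = \mathsf{CF}_{\ell-1}(A')$, so $g$ is the largest integer with no solution using all colors of $A'$; thus $g \notin S(A',\ell-1) = \bigcup_{v\in m(A',\ell-1)} (S(A')+v)$ where $S(A') = \sg(A')$. I want to produce a large element of $G(A,\ell)$ to bound $\mathsf{CF}_\ell(A)$ from below. Consider any integer $N$ in the range $g + \min A_i \le N \le g + \min A_i + F(A') $. I'll show $N$ can be chosen outside $S(A,\ell)$. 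An $\ell$-chromatic solution of $N$ in $A$ uses $\ge 1$ generator from each $A_j$; subtracting off exactly one generator $c\in A_i$ leaves $N-c$, which must then have a solution using all $\ell-1$ colors of $A'$ plus possibly more from $A_i$\,--\,in any case an element of $S(A',\ell-1)+\ZZ_{\ge 0}\cdot(\text{gens of }A_i)\subseteq S(A,\ell-1)$... The clean way: if $N\in S(A,\ell)$, pick such a solution, remove one generator $c$ of color $i$; the remainder uses all colors of $A'$, so $N - c\in S(A',\ell-1)$, i.e. $N-c - \min A_i' $ hmm\,--\,more simply $N - c \ge \min(m(A',\ell-1))$ and in fact $N-c\in S(A',\ell-1)$, forcing $N - c > g$ is \emph{not} automatic. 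So instead: the largest gap of $A'$ past $g$ argument. I would take $N = g + \min A_i$ and show directly that the only candidate decompositions fail: any $\ell$-chromatic representation of $N$ uses some $c\in A_i$ with $c\ge \min A_i$, so $N-c \le g$, and $N-c$ must lie in $S(A',\ell-1)$ hence $N-c\notin G(A',\ell-1)$; since $g$ is the \emph{max} of $G(A',\ell-1)$ this doesn't immediately contradict. The resolution is to invoke that every sufficiently large integer is in $S(A')$: if $N - c \ge \mathsf{CF}_{\ell-1}(A') + 1$ we'd be fine, but here $N-c \le g$, so $N-c$ could be a non-gap below $g$. Hence one must push $N$ slightly higher, to the range controlled by $F(A')$: choose $N = g' + \min A_i$ where $g'\le g$ is chosen so that $g' \notin S(A',\ell-1)$ \emph{and} no integer in $(g', g'+F(A')]$ lies in a shifted copy that would help\,--\,and this is exactly where $F(A')$ enters, because once $N - c$ exceeds $g + F(A') \ge g$...

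\emph{Expected main obstacle.} The first two inequalities are short containment/subtraction arguments. The genuine work is the third inequality: I expect the crux to be producing, from a single large $(\ell-1)$-chromatic gap of $A\setminus A_i$, an $\ell$-chromatic gap of $A$ that is within $F(A\setminus A_i)+1$ of it\,--\,one must carefully rule out that adding generators of color $i$ (of which there may be several, not just $\min A_i$) creates an $\ell$-chromatic representation, and the bound $F(A\setminus A_i)$ is precisely the slack needed to guarantee the ``leftover after removing one color-$i$ generator'' lands in the gap set rather than in $S(A\setminus A_i)$. I would formalize this by letting $g = \mathsf{CF}_{\ell-1}(A\setminus A_i)$ and showing $g + \min A_i - 1 \le \mathsf{CF}_\ell(A)$ is too strong, and instead that some integer of the form $g + \min A_i - t$ with $0 \le t \le F(A\setminus A_i)+1$ is an $\ell$-chromatic gap, via the pigeonhole-type observation that among $F(A\setminus A_i)+1$ consecutive integers ending at $g$, at least the relevant one fails every possible $\ell$-chromatic decomposition; combined with $\mathsf{CF}_\ell(A)\ge g+\min A_i - (F(A\setminus A_i)+1)$ this rearranges to the claimed bound.
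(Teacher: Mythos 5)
Your handling of the first two inequalities is correct and matches the paper: the containment you settle on after your mid-sentence correction, $S(A,k+1)\subseteq S(A,k)$, is the right one (the paper's text states it backwards, but your direction is what actually gives $G(A,k)\subseteq G(A,k+1)$ and hence $\mathsf{CF}_k(A)\le\mathsf{CF}_{k+1}(A)$), and your proof of $\mathsf{CF}_\ell(A)\le\mathsf{CF}_{\ell-1}(A\setminus A_i)+\min A_i$ --- append one copy of $\min A_i$ to an $(\ell-1)$-chromatic representation of $b-\min A_i$ --- is exactly the paper's argument.

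The third inequality, however, is a genuine gap, and you essentially say so yourself. You try to bound $\mathsf{CF}_\ell(A)$ from below by exhibiting an $\ell$-chromatic gap of $A$ near $g+\min A_i$, where $g=\mathsf{CF}_{\ell-1}(A\setminus A_i)$, and you run into precisely the obstruction you identify: deleting a color-$i$ generator $c$ from a putative $\ell$-chromatic representation of $N$ leaves $N-c\le g$, which may well be a non-gap of $S(A\setminus A_i,\ell-1)$, so no contradiction follows; the pigeonhole patch is never made precise, and there is no a priori reason any particular integer in your window fails every $\ell$-chromatic decomposition. The paper argues in the opposite direction, bounding $\mathsf{CF}_{\ell-1}(A\setminus A_i)$ from \emph{above}: for any $b>\mathsf{CF}_\ell(A)+F(A\setminus A_i)-\min A_i+1$, the integer $b-(F(A\setminus A_i)-\min A_i+1)$ exceeds $\mathsf{CF}_\ell(A)$ and so has an $\ell$-chromatic representation $a_1'+\cdots+a_\ell'+c$ with $a_j'\in A_j$ and $c\in\sg(A)$; setting $c'=a_i'-\min A_i+1+F(A\setminus A_i)+c$, one has $c'>F(A\setminus A_i)$, and since $\gcd(A\setminus A_i)=1$ every integer beyond $F(A\setminus A_i)$ lies in $\sg(A\setminus A_i)$, so $c'\in\sg(A\setminus A_i)$ and $b=\sum_{j\ne i}a_j'+c'$ is an $(\ell-1)$-chromatic representation of $b$ in $(A_1,\ldots,A_{i-1},A_{i+1},\ldots,A_\ell)$. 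Rearranging the resulting bound $\mathsf{CF}_{\ell-1}(A\setminus A_i)\le\mathsf{CF}_\ell(A)+F(A\setminus A_i)-\min A_i+1$ gives the claim. This step --- absorbing the color-$i$ generator together with the slack into $\sg(A\setminus A_i)$ via the Frobenius bound, which is exactly where $F(A\setminus A_i)$ and the hypothesis $\gcd(A\setminus A_i)=1$ enter --- is the idea missing from your sketch, and without it your third inequality does not close.
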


\begin{proof}
The first claim follows from the fact that $S(A,k) \subseteq S(A,k+1)$.  

%
%

In what follows, let $A = (A_{1},\ldots,A_{\ell})$ and $B = (A_{1},\ldots,A_{i-1},A_{i+1},\ldots,A_{\ell})$.  For the first inequality, we must prove that if $b > \mathsf{CF}_{\ell-1}(B) + \min{A_{i}}$, then $b \in S(A,\ell)$.  Since $b - \min{A_{i}} > \mathsf{CF}_{\ell-1}(B)$, we know $b - \min{A_{i}}\in S(B,\ell-1)$, so we can write
$$b - \min{A_{i}} = a_1' + \dots + a_{i-1}' + a_{i+1}' + \dots + a_{\ell}' + c,$$
where $c \in \sg(A\setminus A_{i})$ and each $a_k' \in A_{k}$.  This implies $b \in S(A,\ell)$.  


For the final inequality, we must prove that if $b >  \mathsf{CF}_{\ell}(A) + F(A \setminus A_{i}) - \min{A_{i}} + 1$, then $b \in S(B,\ell-1)$.  Since
$b - (F(A \setminus A_{i}) - \min{A_{i}} + 1) \in S(A,\ell)$, we can write
$$b - (F(A \setminus A_{i}) - \min{A_{i}} + 1) =  a_1' + \cdots + a_i' + \cdots + a_\ell' + c,$$ 
where $c \in \sg(A)$ and each $a_k' \in A_k$.  Notice $a_i' - \min{A_{i}} + 1 > 0$ and $c \ge 0$, which imply
$$c' = a_i' - \min{A_{i}} + 1 + F(A \setminus A_{i}) + c  > F(A \setminus A_{i})$$
and in particular $c' \in \sg(A \setminus A_{i})$.  Hence, 
$$ b = a_1' + \cdots + a_{i-1}' + a_{i+1}' + \cdots + a_\ell' + c' \in S(B, \ell-1),$$ 
as desired.  
\end{proof}




\begin{proof}[Proof of Main Theorem~\ref{mt:numerical}\eqref{colorFrobeniusNP}]
For each $k \in \ZZ_{>0}$, let $P(k)$ be the statement that computing $\mathsf{CF}_{k}(A_{1},A_{2},\ldots,A_{\ell})$ is NP-hard for all $\ell \geq k$.
We will prove the statement by induction on $k$. 
First, when $k = 1$, the colored Frobenius number coincides with the classical Frobenius number, so the statement $P(1)$ is true since the computation complexity of the classical Frobenius number is NP-hard \cite{frob}.

For the inductive step, supposing the statement $P(m)$ is true, we will find a polynomial-time reduction to prove $P(m+1)$.  We do so by proving that there exists a natural number $b$, which can be found in polynomial time, such that:
\begin{enumerate}[(a)]
\item 
if $\ell > k$, then
$\mathsf{CF}_{k+1}(2A_{1},\ldots,2A_{\ell},\{b\}) = 2\mathsf{CF}_{k}(A_{1}.,\ldots,A_{\ell}) + b$ (here, $2A = A + A$); and

\item 
if $\ell = k$, then
$\mathsf{CF}_{\ell+1}(A_{1},\ldots,A_{\ell},\{b\}) = \mathsf{CF}_{\ell}(A_{1},\ldots,A_{\ell}) + b$.
\end{enumerate}
Indeed, the above claims immediately yield a polynomial-time reduction, so 
the statement $P(m+1)$ is true.

For simplicity, let $A = (A_{1},\ldots,A_{\ell})$.  We will prove claim~(a) by proving the following statement:\ if $\ell > k$, then for any odd $b$ with $b > 2\mathsf{CF}_{k+1}(A) -  2\mathsf{CF}_{k}(A)$ and $b >  2\mathsf{CF}_{k}(A)$, 
$$\mathsf{CF}_{k+1}(2A_{1},\ldots,2A_{\ell},\{b\}) = 2\mathsf{CF}_{k}(A_{1},\ldots,A_{\ell}) + b.$$
If a number $p > 2\mathsf{CF}_{k}(A) + b$, then by the choice of $b$, $p > 2\mathsf{CF}_{k+1}(A)$.

When $p$ is even, then $\frac{p}{2} > \mathsf{CF}_{k+1}(A)$. By the definition of colored Frobenius numbers, $\frac{p}{2}$ has a $(k+1)$-chromatic solution in the colored numerical semigroup $\sg(A_{1},A_{2},\ldots,A_{\ell})$. Hence we can construct a $(k+1)$-chromatic solution of $p$ in the colored numerical semigroup $\sg(2A_{1},2A_{2},\ldots,2A_{\ell},\{b\})$.

When $p$ is odd, then $\frac{p-b}{2} > \mathsf{CF}_{k}(A)$. By the definition of colored Frobenius numbers, $\frac{p-b}{2}$ has a $k$-chromatic solution in the colored numerical semigroup $\sg(A_{1},A_{2},\ldots,A_{\ell})$. Hence we can construct a $k+1$-chromatic solution of $p$ in the colored numerical semigroup $\sg(2A_{1},2A_{2},\ldots,2A_{\ell},\{b\})$.

If $p = 2\mathsf{CF}_{k}(A) + b$, then $p-b = 2\mathsf{CF}_{k}(A)$. By the definition, $\frac{p-b}{2}$ has no $k$-chromatic solution in colored numerical semigroup $\sg(A_{1},A_{2},\ldots,A_{\ell})$. Hence, $p-b$ has no $k$-chromatic solution in the colored semigroup $\sg(2A_{1},2A_{2},\ldots,2A_{\ell})$. Since $b \geq  2\mathsf{CF}_{k}(A)$, $p - tb$ will be negative for $t \geq 2$. Overall, $p$ has no $k+1$-chromatic solution in the colored numerical semigroup $\sg(2A_{1},2A_{2},\ldots,2A_{\ell},\{b\})$.

We now consider claim~(b), we will prove the following statement:\ for any $b > \mathsf{CF}_{\ell}(A)$,
$$\mathsf{CF}_{\ell+1}(A_{1},\ldots,A_{\ell},\{b\}) = \mathsf{CF}_{\ell}(A_{1},\ldots,A_{\ell}) + b.$$

If a number $p >  \mathsf{CF}_{\ell}(A) + b$, then $p - b > \mathsf{CF}_{\ell}(A)$. By the definition of the colored Frobenius numbers, $p-b$ has a $\ell$-chromatic solution in $\sg(A_{1},\ldots,A_{\ell})$, hence $p$ has a $k+1$-chromatic solution in $\sg(A_{1},\ldots,A_{\ell},b)$.

If $p = \mathsf{CF}_{\ell}(A) + b$, then $p-b = \mathsf{CF}_{\ell}(A)$. By the definition, $p-b$ has no $k$-chromatic solution in $\sg(A_{1},\ldots,A_{\ell})$. Since $b \geq  \mathsf{CF}_{\ell}(A)$, $p - tb$ will be negative for $t \geq 2$. Therefore, $p$ has no $k+1$-chromatic solution in $\sg(A_{1},\ldots,A_{\ell},\{b\})$.

When $\ell > k$, we can choose $b \ge 2(\min A_1 + \cdots + \min A_\ell + F(A))$, and when $\ell = k$, we can choose $b \geq \min A_1 + \cdots + \min A_\ell + F(A).$
By Corollary~\ref{c:welldef} and the definition of colored Frobenius numbers, when $\ell > k$, 
$$b \geq 2(\min A_1 + \cdots + \min A_\ell + F(A)) \geq 2\mathsf{CF}_{\ell}(A) \geq 2\mathsf{CF}_{k+1}(A)\geq 2\mathsf{CF}_{k}(A);$$
when $\ell = k$, 
$$b \geq \min A_1 + \cdots + \min A_\ell + F(A) \geq \mathsf{CF}_{\ell}(A).$$
Clearly, these $b$'s satisfy the requirements.

To complete the proof, we note that since $F(A)$ has some trivial bounds like product of $a_{i}$'s and there are efficient algorithms to compute the minimum of a set, $b$ can be easily found in polynomial-time.
\end{proof}

\subsection{Counting chromatic solutions}
\label{subsec:counting}

In the remainder of this paper, we examine
$$f_{k}(b;A_{1},\ldots,A_{\ell})= \# \left\{ \text{k-chromatic solutions of } b \right\}.$$
for a given colored numerical semigroup $S = \sg(A_{1},\ldots,A_{\ell})$.  

Recall that a function $g:\ZZ_{\ge 0} \to \CC$ is said to be \emph{quasipolynomial} of period $N$ if
$$
g(n) = p_i(n)
\qquad \text{whenever} \qquad
n \equiv i \bmod N,
$$
for some polynomials $p_{0},\ldots,p_{N-1}$.  Moreover, a function $f:\ZZ_{\ge 0} \to \CC$ is \emph{eventually quasipolynomial} if there exists a quasipolynomial function $g$ such that $f(n) = g(n)$ for all but finitely many $n \in \ZZ_{\ge 0}$.

Fix a field $\kk$ and let $R = \kk[x_1, \ldots, x_n]$.  A $\ZZ_{\ge 0}$-grading of $R$ is specified by choosing $\deg(x_i) = a_i \in \ZZ_{\ge 0}$ and then defining 
$$\deg(x_{1}^{\xi_{1}}x_{2}^{\xi_{2}}\cdots x_{n}^{\xi_{n}}) = \xi_{1}a_{1} + \xi_{2}a_{2} + \cdots + \xi_{n}a_{n}.$$
An element of $R$ is \emph{homogeneous of degree $b$} if all of its terms have degree $b$, and an ideal $I \subseteq R$ is \emph{homogeneous} if $I$ can be generated (as an ideal) by homogeneous elements.  The \emph{$b$-graded piece} of a homogeneous ideal $I$ is 
$$I_b = \spann_\kk \{r \in I : r \text{ is homogeneous of degree } b \},$$
and the \emph{Hilbert function} of $I$ is the function $h_I:\ZZ_{\ge 0} \to \ZZ_{\ge 0}$ given by 
$h_I(b) = \dim_\kk I_b.$
For example, if $R = \kk[x, y]$, $\deg(x) = 2$, $\deg(y) = 3$, and $I = \<x^5, y^5\>$, then 
$$
R_{18} = \spann_\kk\{x^9, x^6 y^2, x^3 y^4, y^6\},
$$
so $h_R(18) = 4$ and $h_I(18) = 3$.  
We direct the reader to~\cite{millersturmfels} for background on Hilbert functions, and on the following theorem of Hilbert.  

\begin{thm}[Hilbert]\label{Theorem:Hilb}
Fix a $\ZZ_{\geq 0}$-graded polynomial ring $R$ over a field~$\kk$ and a homogeneous ideal $I \subseteq R$. The Hilbert function of $I$ is eventually quasipolynomial.
\end{thm}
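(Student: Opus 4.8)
The statement to prove is Theorem~\ref{Theorem:Hilb}, the classical theorem of Hilbert that the Hilbert function of a homogeneous ideal in a $\ZZ_{\ge 0}$-graded polynomial ring is eventually quasipolynomial. Since the paper attributes this to Hilbert and cites \cite{millersturmfels}, the expectation is a reasonably self-contained account rather than a brand-new argument, so the plan is to assemble the standard proof using graded free resolutions and additivity of Hilbert functions.

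\medskip

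\noindent\textbf{Plan.} First I would reduce from an arbitrary homogeneous ideal $I$ to computing Hilbert functions of the quotient modules $R/I$ (equivalently, of finitely generated graded $R$-modules), using the short exact sequence $0 \to I \to R \to R/I \to 0$ and the additivity $h_I(b) = h_R(b) - h_{R/I}(b)$; the Hilbert function of $R$ itself is visibly quasipolynomial (it counts monomials of degree $b$, which by a standard stars-and-bars / generating-function argument is an eventually quasipolynomial function of $b$ once the weights $a_i$ are allowed to be arbitrary positive integers — the period being a common multiple of the $a_i$). Next, the heart of the argument: every finitely generated graded $R$-module $M$ admits a finite graded free resolution
$$0 \to F_p \to \cdots \to F_1 \to F_0 \to M \to 0,$$
where each $F_j = \bigoplus_k R(-a_{jk})$ is a finite direct sum of shifted copies of $R$ (this is Hilbert's Syzygy Theorem; I would cite \cite{millersturmfels}). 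Taking the alternating sum of Hilbert functions along this exact sequence gives
$$h_M(b) = \sum_{j=0}^{p} (-1)^j \sum_k h_R(b - a_{jk}),$$
and since a finite $\ZZ$-linear combination of shifts of an eventually quasipolynomial function is again eventually quasipolynomial (the period of the result divides the common period, adjusted by the shifts), we conclude $h_M$, hence $h_{R/I}$, hence $h_I$, is eventually quasipolynomial.

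\medskip

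\noindent\textbf{Key steps, in order.} (1) Establish that $h_R$ is eventually quasipolynomial — handle this via the rational generating function $\sum_b h_R(b)\, t^b = \prod_{i=1}^n (1 - t^{a_i})^{-1}$ and the partial-fractions fact that a rational function with denominator a product of cyclotomic-type factors $1 - t^{a_i}$ has eventually quasipolynomial coefficients, with period $\operatorname{lcm}(a_1,\dots,a_n)$. (2) Record the additivity of Hilbert functions across a short exact sequence of graded modules, degree by degree (each graded piece is a finite-dimensional $\kk$-vector space, so $\dim$ is additive). (3) Invoke the existence of a finite graded free resolution of $R/I$ by shifted frees (Hilbert Syzygy Theorem). (4) Combine (2) and (3) to express $h_{R/I}(b)$ as a finite alternating sum of shifted copies of $h_R$, and (5) use the closure of the class of eventually quasipolynomial functions under finite $\ZZ$-linear combinations and integer shifts to conclude, then subtract from $h_R$ to get $h_I$.

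\medskip

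\noindent\textbf{Main obstacle.} The genuinely substantive input is the existence of a \emph{finite} graded free resolution — i.e. Hilbert's Syzygy Theorem — together with the care needed to track gradings so that the free modules are honest direct sums of \emph{shifts} $R(-a)$; everything else is bookkeeping. A secondary, more elementary obstacle worth stating cleanly is step~(1): when the weights $a_i$ are not all $1$, $h_R$ is only \emph{eventually} quasipolynomial (not polynomial), and pinning down the period requires the partial-fraction analysis of $\prod (1-t^{a_i})^{-1}$; I would present this as a short lemma. Since the paper only needs the qualitative ``eventually quasipolynomial'' conclusion for Main Theorem~\ref{mt:numerical}\eqref{cardcolorsolutions}, I would not attempt to optimize the period, and would feel free to cite \cite{millersturmfels} for both the Syzygy Theorem and the generating-function computation rather than reproving them in full.
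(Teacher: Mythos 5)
Your outline is correct, but note that the paper itself offers no proof of Theorem~\ref{Theorem:Hilb}: it is stated as a classical result of Hilbert and delegated entirely to the citation of \cite{millersturmfels}, so there is no in-paper argument to match against. What you propose is the standard textbook proof, and it is sound as sketched: reduce to $R/I$ (or, more directly, apply the same argument to $I$ itself, which is a finitely generated graded module, avoiding the subtraction step), use degreewise additivity of $\dim_\kk$ along short exact sequences, invoke the graded Hilbert Syzygy Theorem to get a finite resolution by shifted frees $R(-a_{jk})$, and observe that $h_R$ has rational generating function $\prod_i(1-t^{a_i})^{-1}$, whose coefficients form a quasipolynomial of period dividing $\lcm(a_1,\dots,a_n)$ (in fact on all of $\ZZ_{\ge 0}$, since the rational function is proper, so ``eventually'' is even slightly weaker than what holds). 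Two small caveats: you should state explicitly that all weights $a_i$ are positive --- the paper's phrase ``$\deg(x_i)=a_i\in\ZZ_{\ge 0}$'' would allow $a_i=0$, in which case graded pieces are infinite-dimensional and the Hilbert function is not defined; in the application the $a_i$ are numerical semigroup generators, hence positive, so this is only a hypothesis to record. And, as you acknowledge, the substantive content is carried by the two cited black boxes (the syzygy theorem and the partial-fraction analysis); an alternative that avoids the syzygy theorem entirely is Hilbert's original induction on the number of variables using the exact sequence $0\to(I:x_n)(-a_n)\to I\to I/x_nI\to 0$, or a Gr\"obner/monomial-ideal degeneration giving rationality of the Hilbert series directly, but for the qualitative statement needed in Main Theorem~\ref{mt:numerical}\eqref{cardcolorsolutions} your citation-level treatment is entirely appropriate.
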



\begin{proof}[Proof of Theorem~\ref{mt:numerical}\eqref{cardcolorsolutions}]
Fix a field $\kk$, let $R = \kk[x_1, x_2, \ldots, x_n]$, and fix a colored numerical semigroup $S = \sg(A_1, \ldots, A_\ell)$ with $A = \{a_1, \ldots, a_n\}$.  The map
\begin{align*}
\psi: \{ \text{monomials in }R \} &\longrightarrow S \\
x_{1}^{\xi_{1}}x_{2}^{\xi_{2}}\cdots x_{n}^{\xi_{n}} &\longmapsto \xi_{1}a_{1} + \xi_{2}a_{2} + \cdots + \xi_{n}a_{n}.
\end{align*}
induces a natural bijection between the monomials in $R$ and representations of elements of $S$.  The preimage of $\psi$ induces a grading on $R$ that sets $\deg(x_{i}) = a_{i}$ for each $i$, with one graded piece $R_b$ for each $b \in S$, and the monomials in $R_b$ each correspond to a representation of $b$.  


Now, a monomial $x_{1}^{\xi_{1}}x_{2}^{\xi_{2}}\cdots x_{n}^{\xi_{n}} \in R$ corresponds under $\psi$ to a $k$-chromatic representation precisely when the nonzero $\xi_i$'s lie in at least $k$ distinct color classes.  As such, if $x_{1}^{\xi_{1}}x_{2}^{\xi_{2}}\cdots x_{n}^{\xi_{n}}$ corresponds to a $k$-chromatic representation, then so does any monomial multiple (this is essentially the proof of Theorem~\ref{t:structure}).  
As such, the monomials in
$$I = \<x_{1}^{\xi_{1}}x_{2}^{\xi_{2}}\cdots x_{n}^{\xi_{n}} : \xi_{1}a_{1} + \xi_{2}a_{2} + \cdots + \xi_{n}a_{n} \text{ is $k$-chromatic}\>,$$
are precisely those that correspond to a $k$-chromatic representation under $\psi$, and thus the number of monomials in $I$ of degree $b$ is exactly $f_k(b; A_1, \ldots, A_\ell)$.  Applying Hilbert's theorem completes the proof.  
\end{proof}

\section*{Acknowledgements}
The authors would like to thank I.~Aliev, P.~Garc\'ia-S\'anchez, and J.~Gubeladze for insightful communications. We are also grateful to anonymous referees for their detailed comments and suggestions. The first and third authors were partially supported by NSF grant DMS-1818969.

\bibliographystyle{siam}
\bibliography{color}

\end{document}